\newtheorem{theorem}{Theorem}[section]
\newtheorem{lemma}[theorem]{Lemma}
\newtheorem{proposition}[theorem]{Proposition }
\newtheorem{definition}[theorem]{Definition }
\newtheorem{corollary}[theorem]{Corollary}
\newenvironment{proof of lemma 5.2}[1][\bf{Proof of Lemma 5.2.~~}]{\noindent{\it{#1}}}{\hfill$\square$\\ }
\begin{document}
\begin{center}
{\Large
          { \bf  On the ascent-descent spectrum}

}

\end{center}
\begin{center}
{\large
          {\bf
 Nassim Athmouni$^{a}$, Mondher Damak$^{b}$, Chiraz Jendoubi$^{c}$
}

}

\begin{center}
{$^a$ Faculty of Sciences ,
University of Gafsa,
Gafsa-Tunisia.\\
E-mail: athmouninassim@yahoo.fr
}
\end{center}
\begin{center}
{$^{b,c}$ Faculty of Sciences,
University of Sfax,
Sfax-Tunisia.\\
$^b$ E-mail: mondher$\_$damak@yahoo.com\\
$^{c}$ E-mail: chirazjendoubi@live.fr
}
\end{center}
\end{center}
\begin{abstract}
 We  establish the various properties as well as diverse relations of the ascent and descent spectra for bounded linear operators. We specially focus on the theory of subspectrum. Furthermore, we construct a new concept of convergence for such spectra.\\
\textbf{2000 Mathematics Subject Classification:} 47A10, 47A55.\\
\textbf{Keywords:} Spectrum, ascent, descent, subspectrum, convergence.
\end{abstract}
\section{Introduction and  main results}
 Denote by $B(X)$ the algebra of bounded linear operators in a Banach space  $X$ and by $\mathcal{F}(X)$ the set of finite rank operators on $X$. For $T\in B(X)$, we use $N(T)$ and $R(T)$ respectively to denote the null-space and the range of $T$. Let $\alpha(T)=dim N(T)$, if $N(T)$ is finite dimensional and let $\alpha(T)=\infty$ if  $N(T)$ is infinite dimensional. Similarly, let $\beta(T)=dim X / R(T)=codim R(T)$,  if $ X / R(T)$ is finite dimensional and let $\beta(T)=\infty$ if $X / R(T)$ is infinite dimensional.\\
\par The ascent of $T\in B(X)$, denoted by $asc(T)$, is the smallest $n\in\mathbb{N}$ satisfying
 $N(T^n)=N(T^{n+1})$. If such $n$ does not exist then $asc(T)=\infty$. The descent of $T$, denoted by $dsc(T)$, is the smallest $n\in\mathbb{N}$ satisfying $R(T^n)=R(T^{n+1})$. If such $n$ does not exist then $dsc(T)=\infty$. Next, we denote by $Asc(B(X))$ the space of bounded operators $T$ such that $asc(T)$ is finite and by $Dsc(B(X))$ the space of bounded operators $T$ such that $dsc(T)$ is finite. It is worth noting that this algebraic theory was mostly developed by M. A. Kaashoek \cite{20-int} and A.E. Taylor \cite{8-int}. As an interesting result which characterizes ascent-descent operators is the following proposition :
\begin{proposition}\label{cont} \cite{54, burgos} Let $T$ be a linear operator on a vector space $X$ and $m$ be a positive natural. The following assertions hold true :\\
 i) $asc(T)\leq m<\infty$ if and only if for every $n\in\mathbb{N}$,  $R(T^m)\cap N( T^n)=\{0\}$.\\
ii) $dsc(T)\leq m<\infty$ if and only if for every $n\in\mathbb{N}$ there exists a subspace $Y_n\subset N(T^m)$ satisfying $X=Y_n\oplus R(T^n)$.
\end{proposition}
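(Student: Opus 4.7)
My plan is to treat parts (i) and (ii) independently, and within each to split the ``if and only if'' into its two directions. The two parts are dual to one another (null spaces versus ranges), but I do not see a direct way to quote one from the other, so I will give independent arguments.

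\textbf{Part (i), necessity.} Assuming $asc(T)\le m$, one has $N(T^m)=N(T^{m+k})$ for every $k\ge 0$. For $x\in R(T^m)\cap N(T^n)$ I would write $x=T^m y$; then $T^{m+n}y=T^n x=0$, so $y\in N(T^{m+n})=N(T^m)$, whence $x=T^m y=0$. \textbf{Part (i), sufficiency.} It suffices to show $N(T^{m+1})\subset N(T^m)$: for $x\in N(T^{m+1})$ the vector $T^m x$ lies simultaneously in $R(T^m)$ and in $N(T)$, which by the hypothesis taken at $n=1$ reduces to $\{0\}$, so $x\in N(T^m)$.

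\textbf{Part (ii), necessity.} I would first derive the weaker identity $X=N(T^m)+R(T^n)$: given $x\in X$, the descent condition gives $T^m x\in R(T^m)=R(T^{m+n})$, so $T^m x=T^{m+n}w$ for some $w$, and then $x-T^n w\in N(T^m)$ produces the decomposition $x=(x-T^n w)+T^n w$. Because the sum need not be direct, I would select (by a Zorn-type argument) an algebraic complement $Y_n$ of $W:=N(T^m)\cap R(T^n)$ inside $N(T^m)$, so that $N(T^m)=W\oplus Y_n$. Since $W\subset R(T^n)$, the sum $X=Y_n+R(T^n)$ follows, and directness is obtained from the chain $Y_n\cap R(T^n)\subset N(T^m)\cap R(T^n)=W$ together with $Y_n\cap W=\{0\}$. \textbf{Part (ii), sufficiency.} Applying the hypothesis at $n=m+1$, every $x=T^m y\in R(T^m)$ admits a decomposition $y=u+T^{m+1}z$ with $u\in Y_{m+1}\subset N(T^m)$; thus $x=T^m u+T^{2m+1}z=T^{2m+1}z\in R(T^{m+1})$, which yields $R(T^m)=R(T^{m+1})$ and hence $dsc(T)\le m$.

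The main obstacle I anticipate is in the necessity direction of (ii): I need the purely algebraic existence of a complement $Y_n$ of $W$ in $N(T^m)$, and the verification that $X=Y_n\oplus R(T^n)$ requires threading the argument carefully through the auxiliary subspace $W$. The remaining steps, including both directions of (i) and the sufficiency in (ii), are essentially direct manipulations of the stabilization identities $N(T^m)=N(T^{m+1})$ and $R(T^m)=R(T^{m+1})$ that characterize finite ascent and finite descent.
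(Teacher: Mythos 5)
Your proof is correct. Note that the paper itself gives no argument for Proposition \ref{cont}; it is simply quoted from \cite{54, burgos}, and your proof is essentially the standard one found in those sources: the two directions of (i) via the stabilization $N(T^m)=N(T^{m+n})$ and the special case $n=1$ of the intersection condition, and for (ii) the decomposition $X=N(T^m)+R(T^n)$ refined by choosing an algebraic (Zorn/Hamel basis) complement $Y_n$ of $N(T^m)\cap R(T^n)$ inside $N(T^m)$, together with the $n=m+1$ instance for the converse. All steps check out, including the directness verification $Y_n\cap R(T^n)\subset N(T^m)\cap R(T^n)$ and the purely algebraic existence of the complement, which indeed needs no topology since the statement is at the vector-space level.
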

Recently, stability problems of operators under perturbation have attracted many researchers and undergone important contributions, see for instance \cite{ABJ, 24-int, 5-boasso, burgos, mnif, monsieur, vladimir, 31-int}. Recall that, in \cite{zdr}, authors have proved nice relations between the left Browder spectrum and the left invertible spectrum (respectively, between the right Browder spectrum and the right invertible spectrum). Motivated by these last works and considering the following subsets :
  \begin{eqnarray*}
&& \sigma_{asc}(T)=\{\lambda\in\mathbb{C}: T-\lambda \notin Asc( B(X)) \}\hbox{-the ascent spectrum},\\
&& \sigma_{dsc}(T)=\{\lambda\in\mathbb{C}: T-\lambda \notin Dsc( B(X)) \}\hbox{-the descent spectrum}.
\end{eqnarray*}
\par Focusing on the axiomatic theory of subspectrum which was introduced by Slodowski and Zelazko \cite{17, 21}, we, first, recall the following definitions.
  \begin{definition} \label{Saber}
  A subspectrum $\tilde{\sigma}$ in $B(X)$ is a mapping which assigns to every $n-$tuple $(T_1,..,T_n)$ of mutually commuting elements of $B(X)$ a non-empty compact subset $\tilde{\sigma}(T_1,...,T_n)\subset\mathbb{C}^n$ such that\\
  $i)$ $\tilde{\sigma}(T_1,...,T_n)\subset\sigma(T_1)\times..\times\sigma(T_n)$,\\
  $ii)$ $\tilde{\sigma}(p(T_1,...,T_n))=p(\tilde{\sigma}(T_1,...,T_n))$ for every commuting $T_1,..,T_n\in B(X)$ and every polynomial mapping $p=(p_1,..,p_m):\mathbb{C}^n\to\mathbb{C}^m$.
\end{definition}
The concept of Definition \ref{Saber} is trully suitable since it comprises for example the left (right) spectrum, the left (right) approximate point spectrum, the Harte (the union of the left and right) spectrum. \ \ Despite, there are also many examples of spectrum, frequently characterized only for single elements, which are not covered by the axiomatic theory of $\dot{Z}$elazko. 
\begin{definition}\cite{km}
Let $\mathcal{A}$ be a Banach algebra. A non-empty subset $R$ of $\mathcal{A}$ is called a regularity if\\
$i)$ if $a\in\mathcal{A}$ and $n\in\mathbb{N}$ then $a\in\mathcal{A}\Leftrightarrow a^n\in\mathcal{A}$,\\
$ii)$ if $a,\, b,\, c,\, d $ are mutually commuting elements of $\mathcal{A}$ and $ac+bd=1_{\mathcal{A}}$, then
$ab\in R \Leftrightarrow a\in R \hbox{ and } b\in R$.
\end{definition}
Such theory was taken into account by many researchers due to its widespread concept that generalizes spectra for single elements to spectra for n-tuple elements. We recall that, in \cite{anar}, the author introduced regularities and subspectra in a unital noncommutative Banach algebra and showed that there is a correspondence between them  similarly to the commutative case. In \cite{km}, the authors gave the following conditions on a regularity $R$, to extend spectra to subspectra.\\
    $(C_1)$ $ab\in R \Leftrightarrow a\in R$  and  $b\in R$  for all commuting elements $a,\, b\in\mathcal{A}$.\\
 $(C_2)$ "Continuity on commuting elements", i.e :
 If $a_n,\, a\in\mathcal{A}$, $a_n$ converges to  $a$ and $a_na=aa_n$ for every $n$ then $\lambda\in\sigma_R(a)$ if and only if there exists a sequence $\lambda_n\in\sigma_R(a_n)$ such that $\lambda_n$ converges to $\lambda$.\\
 It is worth noting that the space of bounded linear operators $B(X)$ is a particular case of the Banach algebra $\mathcal{A}$ and $Asc(B(X))$ and $Dsc(B(X))$ are regularities on $B(X)$. For more information, see \cite{5-boasso, 6-boasso, 19-boasso}. Throughout our work,under suitable optimal hypothesis, we will prove that conditions $(C_1)$ and $(C_2)$ are approved properties for the ascent and descent spectra. For the  condition $(C_1)$ ,  we need the following assumptions.\\
  $(H_1)$: \ \ \ $ST=TS\ \hbox{ and }  \forall p\in\mathbb{N},\   \,N((TS)^p)=N(T^p)\oplus N(S^p).$\\
  $(H_2)$:  \ \ \  $ST \hbox{ is with finite descent }  n_0 \hbox{ and  } N(S^{n_0})\subseteq R(T) \hbox{ or }  N(T^{n_0})\subseteq R(S).$\\
  Consider the following subsets :
  \begin{eqnarray*}
  &&\tilde{F}:=\{F\in B(X)\hbox{such that there exists $n_0\in\mathbb{N}$ for which }F^{n_0}\in\mathcal{F}(X)\},\\
  &&\mathcal{R}:=\{\lambda\in\mathbb{C} \hbox{ such that if $S+T-\lambda$ is ascent then $(S-\lambda)$ and $(T-\lambda)$ do not satisfy $(H_1)$}\},\\
  &&\mathcal{M}:=\bigg\{\lambda\in\mathbb{C} \hbox{ such that if $dsc((S-\lambda)(T-\lambda))=n_0$ then $codim (R(S-\lambda)^{n_0})$ is infinite},\\
  &&\hbox{ or $codim (R(T-\lambda)^{n_0})$ is infinite}\bigg\},\\
  &&\mathcal{N}:=\big\{\lambda\in\mathbb{C} \hbox{ such that if $S+T-\lambda$ is descent then $(S-\lambda)$ and $(T-\lambda)$ do not satisfy $(H_1)$ or $(H_2)$}\big\}.
  \end{eqnarray*}
  Our main results  assuring condition $(C_1)$ reside in :
   \begin{theorem}\label{th1}
   Assume that $S$ and $T$ be two bounded linear operators satisfying $(H_1)$ and that $ST\in\tilde{F}$. Then,
 \begin{equation*}
 \mathcal{R}\cup \sigma_{asc}(S+T)\setminus\{0\}= \mathcal{R}\cup (\sigma_{asc}(S)\cup \sigma_{asc}(T))\setminus\{0\}.
\end{equation*}
\end{theorem}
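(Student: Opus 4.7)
The proof proceeds by double inclusion. One direction is formal, while the other requires reducing the problem to an ascent statement about the product $(S-\lambda)(T-\lambda)$ and invoking a commuting-perturbation argument.

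\emph{Easy inclusion.} I would first observe that $\sigma_{asc}(S+T)\setminus\{0\}\subseteq\mathcal{R}$ straight from the definition of $\mathcal{R}$: if $\lambda\in\sigma_{asc}(S+T)$, then $S+T-\lambda$ is not ascent, so the conditional in the definition of $\mathcal{R}$ is vacuously true and $\lambda\in\mathcal{R}$. Consequently $\mathcal{R}\cup(\sigma_{asc}(S+T)\setminus\{0\})\subseteq\mathcal{R}$, which is trivially contained in the right-hand side.

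\emph{Hard inclusion.} For the reverse direction it suffices to show that every $\lambda\in(\sigma_{asc}(S)\cup\sigma_{asc}(T))\setminus\{0\}$ lying outside $\mathcal{R}$ already belongs to $\sigma_{asc}(S+T)$; equivalently, assuming $\lambda\notin\mathcal{R}$, $\lambda\notin\sigma_{asc}(S+T)$, and $\lambda\neq 0$, I must prove that $asc(S-\lambda)<\infty$ and $asc(T-\lambda)<\infty$. Unravelling the definition of $\mathcal{R}$, the hypothesis $\lambda\notin\mathcal{R}$ forces the pair $(S-\lambda,T-\lambda)$ to satisfy $(H_1)$ together with $asc(S+T-\lambda)<\infty$. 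The local $(H_1)$ gives $N(((S-\lambda)(T-\lambda))^p)=N((S-\lambda)^p)\oplus N((T-\lambda)^p)$ for every $p$; since both summands on the right are increasing families of subspaces, the elementary fact that a stabilising direct sum of increasing inclusions forces each summand to stabilise yields the identity $asc((S-\lambda)(T-\lambda))=\max(asc(S-\lambda),asc(T-\lambda))$. Hence the task reduces to showing that $asc((S-\lambda)(T-\lambda))<\infty$.

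\emph{Perturbation argument.} I then exploit the algebraic identity
\[(S-\lambda)(T-\lambda)=ST-\lambda(S+T-\lambda),\]
and set $A:=S+T-\lambda$, $B:=ST$, so that $(S-\lambda)(T-\lambda)=B-\lambda A$ with $A$ and $B$ commuting. By hypothesis $asc(A)<\infty$, and the assumption $ST\in\tilde{F}$ supplies $n_0$ with $B^{n_0}$ of finite rank, so both $asc(B)$ and $dsc(B)$ are finite. Choosing $k$ beyond them, $X=N(B^k)\oplus R(B^k)$ with $R(B^k)$ finite-dimensional and both summands $A$-invariant by commutativity. On the finite-dimensional block $R(B^k)$ ascent is automatic, while on $N(B^k)$ the operator $B$ is nilpotent and $A|_{N(B^k)}$ still has finite ascent. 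A commuting-nilpotent-perturbation step---where $\lambda\neq 0$ is used decisively to rewrite $B-\lambda A=-\lambda(A-\lambda^{-1}B)$ so that $\lambda^{-1}B|_{N(B^k)}$ is a nilpotent perturbation commuting with $A$---shows that $(B-\lambda A)|_{N(B^k)}$ has finite ascent. Gluing the two blocks produces $asc((S-\lambda)(T-\lambda))<\infty$, completing the proof.

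\emph{Main obstacle.} The genuinely delicate point is the perturbation lemma: if $A$ has finite ascent and $N$ commutes with $A$ and is nilpotent, then $A+cN$ has finite ascent for every scalar $c$. I would verify it by expanding $(A+cN)^p$ binomially, using $N^{\nu}=0$ to truncate the sum, and applying Proposition~\ref{cont}(i) to check that $R((A+cN)^m)\cap N((A+cN)^n)=\{0\}$ for $m$ sufficiently large, a combinatorial book-keeping that lies at the heart of the argument. The hypothesis $\lambda\neq 0$ is precisely what converts the algebraic identity $(S-\lambda)(T-\lambda)=ST-\lambda(S+T-\lambda)$ into a valid instance of this lemma.
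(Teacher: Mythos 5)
Your proof is correct, and its core mechanism is the same as the paper's: you use the identity $(S-\lambda)(T-\lambda)=ST-\lambda(S+T-\lambda)$, transfer finite ascent from $S+T-\lambda$ to the product through a perturbation by the commuting operator $ST\in\tilde{F}$, and then split the finite ascent of the product into finite ascents of the two factors using $(H_1)$ (your stabilisation argument for the direct sum $N((S-\lambda)^p)\oplus N((T-\lambda)^p)$ is exactly the content of Lemma \ref{theo3.4}). You differ from the paper in two places. First, for the perturbation step the paper simply invokes \cite[Theorem 2.2]{31-int}, whereas you re-derive the special case you need: since some power of $B=ST$ has finite rank, $asc(B)$ and $dsc(B)$ are finite, $X=N(B^k)\oplus R(B^k)$ with $R(B^k)$ finite dimensional, and on $N(B^k)$ you are reduced to a commuting nilpotent perturbation of $A=S+T-\lambda$. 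That reduction is sound, and the nilpotent lemma you flag as the delicate point is true; it even admits a shorter proof than the binomial book-keeping you propose: if $N^\nu=0$ and $AN=NA$, then $N(A^p)\subseteq N((A+cN)^{p+\nu})\subseteq N(A^{p+2\nu})$ for every $p$, whence $asc(A+cN)\leq asc(A)+\nu$. Second, for the forward inclusion you read the definition of $\mathcal{R}$ as a material implication, so that $\sigma_{asc}(S+T)\setminus\{0\}\subseteq\mathcal{R}$ and that direction becomes vacuous; this is legitimate for the equality as literally stated, but it proves less than the paper, whose Lemma \ref{monn} establishes the stronger inclusion $\sigma_{asc}(S+T)\setminus\{0\}\subseteq(\sigma_{asc}(S)\cup\sigma_{asc}(T))\setminus\{0\}$ with no help from $\mathcal{R}$, again via \cite[Theorem 2.2]{31-int}. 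A minor redundancy: $\lambda\notin\mathcal{R}$ already yields $asc(S+T-\lambda)<\infty$, so the extra assumption $\lambda\notin\sigma_{asc}(S+T)$ in your hard direction is not needed.
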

In the descent case we obtained the following results:
\begin{theorem}\label{(C)}
   Assume that $S$ and $T$ are two bounded linear operators satisfying $(H_1)$ and $(H_2)$.
 If $codim R(S^{n_0})$ and $codim R(T^{n_0})$ are finite then
$$
dsc(T)\leq n_0 \hbox{ and }dsc(S)\leq n_0 \hbox{ if and only if }dsc(TS)\leq n_0 .
$$
\end{theorem}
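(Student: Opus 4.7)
I will prove the equivalence by treating the two implications separately.

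\emph{Forward direction.} Assuming $dsc(T)\le n_0$ and $dsc(S)\le n_0$, only the commutativity $TS=ST$ from $(H_1)$ is needed. Since $R(T^{n_0})=R(T^{n_0+1})$ and $R(S^{n_0})=R(S^{n_0+1})$, I would chain
\[
R((TS)^{n_0})=S^{n_0}T^{n_0}(X)=S^{n_0}T^{n_0+1}(X)=T^{n_0+1}S^{n_0}(X)=T^{n_0+1}S^{n_0+1}(X)=R((TS)^{n_0+1}),
\]
which gives $dsc(TS)\le n_0$. Hypothesis $(H_2)$ and the codimension assumption play no role here.

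\emph{Backward direction.} Assume $dsc(TS)\le n_0$ and fix $y=T^{n_0}x\in R(T^{n_0})$. Applying $S^{n_0}$ and using commutativity, $S^{n_0}y=(TS)^{n_0}x\in R((TS)^{n_0})=R((TS)^{n_0+1})$, so there exists $w\in X$ with $S^{n_0}y=T^{n_0+1}S^{n_0+1}w$. Set $r:=T^{n_0+1}(Sw)\in R(T^{n_0+1})$; then $S^{n_0}(y-r)=0$, so $z:=y-r\in R(T^{n_0})\cap N(S^{n_0})$. To force $z\in R(T^{n_0+1})$, write $z=T^{n_0}x'$; the identity $T^{n_0}S^{n_0}x'=(TS)^{n_0}x'=0$ places $x'$ in $N((TS)^{n_0})$, and by $(H_1)$ I decompose $x'=a+b$ with $a\in N(T^{n_0})$, $b\in N(S^{n_0})$, which gives $z=T^{n_0}b$. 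Invoking the clause $N(S^{n_0})\subseteq R(T)$ of $(H_2)$, write $b=Tc$; then $z=T^{n_0+1}c\in R(T^{n_0+1})$ and $y=r+z\in R(T^{n_0+1})$. Hence $R(T^{n_0})=R(T^{n_0+1})$, proving $dsc(T)\le n_0$. An entirely symmetric argument (swap $T$ and $S$, invoking instead the alternative $N(T^{n_0})\subseteq R(S)$ of $(H_2)$) yields $dsc(S)\le n_0$.

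\textbf{Main obstacle.} The delicate point is that $(H_2)$ offers only one of the two inclusions $N(S^{n_0})\subseteq R(T)$ or $N(T^{n_0})\subseteq R(S)$, whereas the statement demands both descents simultaneously. I expect the finite codimension hypothesis to be precisely what bridges this gap: using the identity $codim\,T^{n_0}(R(S^{n_0}))=codim\,R(T^{n_0})+codim(R(S^{n_0})+N(T^{n_0}))$ (and its analogue with $n_0+1$), together with the equality $codim\,R((TS)^{n_0})=codim\,R((TS)^{n_0+1})$ coming from $dsc(TS)\le n_0$, a dimension count should force the missing descent once the one supplied by $(H_2)$ has been established. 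This codimension balancing step is the only part of the argument I expect to be technically substantive; the extraction scheme in the backward direction is otherwise direct from $(H_1)$ and the $dsc(TS)\le n_0$ stability.
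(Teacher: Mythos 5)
Your forward direction is correct and coincides with the paper's Lemma \ref{lemma3.5}. Your extraction argument for the first descent in the backward direction is also correct, and is in fact a clean, self-contained variant of the paper's Lemma \ref{lemma3.6}: from $dsc(TS)\le n_0$, $(H_1)$ and the available clause of $(H_2)$ (say $N(S^{n_0})\subseteq R(T)$) you rigorously get $R(T^{n_0})=R(T^{n_0+1})$, i.e. $dsc(T)\le n_0$, and notably without using the codimension hypothesis.

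However, the proof is incomplete at exactly the point you flag as the ``main obstacle'': the descent of the operator \emph{not} covered by the chosen clause of $(H_2)$ is never established; you only assert that ``a dimension count should force the missing descent.'' This is a genuine gap, and it is the substantive half of the theorem. What follows easily from $dsc(TS)\le n_0$ together with $dsc(T)\le n_0$ is only $T^{n_0}(R(S^{n_0}))=T^{n_0}(R(S^{n_0+1}))$, hence $R(S^{n_0})\subseteq R(S^{n_0+1})+N(T^{n_0})$; the codimension identity you sketch does not eliminate the $N(T^{n_0})$ discrepancy, and no count involving only $codim\,R(S^{n_0})$ and $codim\,R(T^{n_0})$ obviously does. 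The paper closes this gap with different machinery: once one descent is in hand (say $dsc(S)\le n_0$), it compresses the other operator to the finite-codimensional subspace $R(S^{n_0})$ -- this is precisely where the hypothesis $codim\,R(S^{n_0})<\infty$ enters -- shows that the compression $T_P$ satisfies $T_P^n=T_P^{n+1}$ in range for $n\ge n_0$ using $R((TS)^n)=R((TS)^{n+1})$ and $S^nX=S^{n_0}X$, and then transfers finite descent from $T_P$ back to $T$ via Lemma \ref{ca}, which rests on Lemma \ref{4.1} and the stability of finite descent under commuting finite-rank perturbations (\cite[Theorem 3.1]{burgos}). Some input of this perturbation/compression type (or a genuinely worked-out replacement for it) is required; as written, your backward direction proves only one of the two descents.
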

\begin{corollary}\label{nov}
 If $ST=TS\in\tilde{\mathcal{F}}$ then :\\
i) $\sigma_{dsc}(S+T)\setminus\{0\}\subseteq(\sigma_{dsc}(S)\cup \sigma_{dsc}(T))\setminus\{0\}.$\\
ii)
$$
\mathcal{M}\cup\mathcal{N}\cup\sigma_{dsc}(S+T)\setminus\{0\}=\mathcal{M}\cup\mathcal{N}\cup(\sigma_{dsc}(S)\cup \sigma_{dsc}(T))\setminus\{0\}.
$$
\end{corollary}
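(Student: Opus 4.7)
Both parts of the corollary rest on the elementary algebraic identity
\begin{equation*}
(S-\lambda)(T-\lambda)=ST-\lambda(S+T-\lambda),
\end{equation*}
valid because $S$ and $T$ commute. For $\lambda\neq 0$, this reduces the study of $S+T-\lambda$ to that of the commuting product $(S-\lambda)(T-\lambda)$, up to the ``small'' operator $ST\in\tilde{\mathcal{F}}$. The proof will combine this identity with Theorem \ref{(C)} and with the finite-rank-power nature of $ST$.

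\textbf{Part (i).} I would proceed by contraposition. Assume $\lambda\neq 0$ and that both $dsc(S-\lambda)$ and $dsc(T-\lambda)$ are finite. The forward direction of Theorem \ref{(C)} applied to the commuting pair $S-\lambda$, $T-\lambda$ (whose required hypotheses should be extractable from the finite-rank-power status of $ST$, via the kernel-decomposition on the Fitting-like splitting induced by $(ST)^{n_{0}}\in\mathcal{F}(X)$) yields $dsc((S-\lambda)(T-\lambda))<\infty$. Rewriting the identity as $\lambda(S+T-\lambda)=ST-(S-\lambda)(T-\lambda)$ and exploiting that $ST$ commutes with every operator in sight and possesses a finite-rank power, I would track the descending range chain $\{R((S+T-\lambda)^{k})\}_{k}$ and show that its stabilization is governed by that of $(S-\lambda)(T-\lambda)$, up to a finite-dimensional correction coming from $ST$. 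Since $\lambda\neq 0$ absorbs the scalar factor, this gives $dsc(S+T-\lambda)<\infty$. The transfer of finite descent through a commuting $\tilde{\mathcal{F}}$-perturbation, together with the extraction of the Theorem \ref{(C)} hypotheses from the bare assumption $ST=TS\in\tilde{\mathcal{F}}$, is the most delicate point of the argument.

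\textbf{Part (ii).} The inclusion $\subseteq$ is immediate from part (i). For $\supseteq$, let $\lambda\neq 0$ lie outside $\mathcal{M}\cup\mathcal{N}\cup\sigma_{dsc}(S+T)$. Then $S+T-\lambda$ has finite descent, and $\lambda\notin\mathcal{N}$ forces the pair $(S-\lambda),(T-\lambda)$ to satisfy $(H_{1})$ and $(H_{2})$. Condition $(H_{2})$ delivers $dsc((S-\lambda)(T-\lambda))=n_{0}<\infty$, and $\lambda\notin\mathcal{M}$ guarantees that $codim R((S-\lambda)^{n_{0}})$ and $codim R((T-\lambda)^{n_{0}})$ are both finite. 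Theorem \ref{(C)} applied to the pair $S-\lambda$, $T-\lambda$ then yields $dsc(S-\lambda)\leq n_{0}$ and $dsc(T-\lambda)\leq n_{0}$, so $\lambda\notin\sigma_{dsc}(S)\cup\sigma_{dsc}(T)$, which closes the argument.
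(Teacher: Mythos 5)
Your part (ii) is essentially the paper's own argument and is correct: the inclusion comes from part (i), and for the reverse inclusion you use $\lambda\notin\mathcal{N}$ (together with the finite descent of $S+T-\lambda$) to obtain $(H_1)$ and $(H_2)$ for the shifted pair, $\lambda\notin\mathcal{M}$ to obtain the finite codimensions, and Theorem \ref{(C)} to conclude that $dsc(S-\lambda)$ and $dsc(T-\lambda)$ are finite.

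Part (i), however, has a genuine gap at its two key steps. First, to get $dsc\big((S-\lambda)(T-\lambda)\big)<\infty$ you invoke ``the forward direction of Theorem \ref{(C)}'' and propose to extract its hypotheses $(H_1)$, $(H_2)$ from the bare assumption $ST=TS\in\tilde{\mathcal{F}}$. That extraction is not possible in general: $(H_1)$ for the shifted pair demands $N\big(((S-\lambda)(T-\lambda))^p\big)=N\big((S-\lambda)^p\big)\oplus N\big((T-\lambda)^p\big)$ for every $p$, which already fails when $S=T$ and $N\big((T-\lambda)^p\big)\neq\{0\}$ (the direct sum forces the common kernel to be trivial), and nothing about $ST$ having a finite-rank power produces such a splitting for $S-\lambda$, $T-\lambda$. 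Fortunately it is also unnecessary: the implication you actually need --- two commuting operators of finite descent have a product of finite descent --- holds under commutativity alone; this is the paper's Lemma \ref{lemma3.5}, proved by the one-line computation $(ST)^p(X)=S^pT^p(X)=S^{p+1}T^{p+1}(X)=(ST)^{p+1}(X)$, and you should invoke that instead of Theorem \ref{(C)}. Second, the passage from $dsc\big(ST-\lambda(S+T-\lambda)\big)<\infty$ to $dsc(S+T-\lambda)<\infty$ is the genuinely nontrivial ingredient of part (i), and ``tracking the descending range chain up to a finite-dimensional correction'' is a sketch, not a proof: what is needed is the stability of finite descent under a perturbation that commutes with the operator and has a finite-rank power, which is exactly \cite[Theorem 3.1]{burgos}, applied here with the perturbation $ST$ (which commutes with $S+T-\lambda$). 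Unless you cite that theorem (as the paper does) or reprove it --- which takes real work --- part (i) remains incomplete.
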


   In \cite{km}, the authors show that if $R$ is a regularity in a Banach algebra
   $\mathcal{A}$ then we have, if $a,\,b\in\mathcal{A}$, $ab=ba$ and $a\in Inv(\mathcal{A})$ then :
   \begin{equation}\label{mul}
   ab\in R \Leftrightarrow a\in R\hbox{ and } b\in R.
   \end{equation}
   In our work, dealing with ascent and descent operators as a special case of  a regularity, we prove in  Lemma \ref{theo3.4} and Theorem \ref{(C)} that
   property (\ref{mul}) is satisfied  without needing the invertibility of any of the bounded operators $T$ and $S$.
   
  On the other hand, inspired by the continuity concept of families of magnetic pseudo-differential operators given in \cite{nassimAJ}, we create a concept of convergence of spaces to prove condition $(C_2)$. Our main result in this context follow on :
  \begin{theorem}\label{T1}
Let $(T_n)_{n\in\mathbb{N}}$ be a sequence of bounded linear operators convergent to $T$ in the operator norm. Assume that $T$ has a closed range and for all $x$ in $N(T)$, $dist(x,N(T_n))$ is reached from some rank.
 Then:\\ i) $\lambda\in\sigma_{asc}(T)$ if and only if there exists a sequence  $\lambda_n\in\sigma_{asc}(T_n)$  convergent to $\lambda$.\\ii) $\lambda\in\sigma_{dsc}(T)$ if and only if there exists a sequence  $\lambda_n\in\sigma_{dsc}(T_n)$ convergent to $\lambda$.
\end{theorem}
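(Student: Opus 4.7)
My plan is to prove part (i) in detail and note that part (ii) is obtained by the dual argument, substituting Proposition \ref{cont}(ii) for Proposition \ref{cont}(i) and working with ranges instead of null spaces (this is where the closed-range assumption plays its decisive role on the descent side).

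The first step is to convert the two standing hypotheses into a usable \emph{null-space approximation}: for every $x\in N(T)$, some sequence $x_n\in N(T_n)$ converges to $x$ in norm. Since $R(T)$ is closed, the open mapping theorem applied to the induced injection $X/N(T)\to R(T)$ produces a constant $c>0$ with $\|Ty\|\geq c\,\mathrm{dist}(y,N(T))$. Because $T_n\to T$ in norm, $\|T_n x\|=\|(T_n-T)x\|\to 0$; combining this with the attainment assumption yields $y_n\in N(T_n)$ with $\|y_n-x\|=\mathrm{dist}(x,N(T_n))\to 0$. I would then iterate this to get approximation of $N((T-\lambda)^k)$ by $N((T_n-\lambda_n)^k)$ whenever $\lambda_n\to\lambda$, since powers of $T_n-\lambda_n$ converge in norm to the corresponding power of $T-\lambda$ and the kernel approximation argument runs in the same way for each iterate.

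For the backward implication of (i), assume $\lambda_n\in\sigma_{asc}(T_n)$ with $\lambda_n\to\lambda$ and argue by contradiction that $\mathrm{asc}(T-\lambda)=k<\infty$. By Proposition \ref{cont}(i) we have $R((T-\lambda)^k)\cap N((T-\lambda)^j)=\{0\}$ for every $j$. Using the null-space approximation of the first step together with the norm continuity of the iterated ranges, one transports this triviality to $R((T_n-\lambda_n)^k)\cap N((T_n-\lambda_n)^j)=\{0\}$ for $n$ large, which by Proposition \ref{cont}(i) forces $\mathrm{asc}(T_n-\lambda_n)\leq k$, contradicting $\lambda_n\in\sigma_{asc}(T_n)$. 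For the forward implication, assume $\mathrm{asc}(T-\lambda)=\infty$, so by Proposition \ref{cont}(i) for each $m$ there is $j_m$ and a nonzero vector $v_m\in R((T-\lambda)^{m})\cap N((T-\lambda)^{j_m})$. The idea is to choose $\lambda_n$ within $1/n$ of $\lambda$ so that analogous witnesses exist for $T_n-\lambda_n$; if $\lambda\in\sigma_{asc}(T_n)$ for infinitely many $n$ one may simply take $\lambda_n=\lambda$ along that subsequence, and otherwise one perturbs: because $T_n\to T$ with $T-\lambda$ having closed range, the approximate witnesses $v_m$ lift to near-witnesses for $T_n$, and by selecting $\lambda_n$ on an appropriate grid one ensures $\mathrm{asc}(T_n-\lambda_n)=\infty$ from some rank.

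The main obstacle is the forward direction of Step 3: the property ``infinite ascent'' is encoded by infinitely many witnesses of non-triviality spread across all iterates, and a single perturbation parameter $\lambda_n$ must be chosen to preserve enough of them to prevent stabilization of $N((T_n-\lambda_n)^k)$ at any finite $k$. The quantitative control of both $R((T_n-\mu)^m)$ and $N((T_n-\mu)^{j})$ under joint variation of $n$ and $\mu$ is the delicate point, and this is precisely where the attainment hypothesis on distances—together with the closed-range condition that furnishes the uniform lower estimate on $T$—is used to prevent ranges from collapsing in the limit.
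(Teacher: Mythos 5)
Your proposal has genuine gaps, and they sit exactly where the real work of this theorem lies. The first is in your Step 1. From closedness of $R(T)$ you obtain a constant $c>0$ with $\|Ty\|\geq c\,\mathrm{dist}(y,N(T))$; this estimate controls distances \emph{to} $N(T)$, whereas what you need is $\mathrm{dist}(x,N(T_n))\to 0$ for $x\in N(T)$. The relevant inequality for that is $\mathrm{dist}(x,N(T_n))\leq \|T_nx\|/\gamma(T_n)$, i.e.\ it requires a lower bound on the reduced minimum modulus of the approximating operators $T_n$, not of $T$. Knowing $\|T_nx\|=\|(T_n-T)x\|\to 0$ together with the attainment hypothesis only furnishes a nearest point $y_n\in N(T_n)$; it does not make $\|x-y_n\|$ small. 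So the null-space approximation you rely on (the analogue of the paper's Lemma \ref{lemma 5}, which is precisely where the closed-range and attainment hypotheses are consumed in the paper) is not established by your argument, and the same non sequitur propagates to the iterated kernels $N((T_n-\lambda_n)^k)$.

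The second gap is that both transport steps are asserted rather than proved, and they are the substance of the paper's Propositions \ref{lem2}, \ref{lem1}, \ref{lem4} and \ref{lem3}. In your backward direction you appeal to ``norm continuity of the iterated ranges'', but ranges are only lower-semicontinuous under operator-norm convergence (Lemma \ref{lemma3'}); upper-convergence of $R((T_n-\lambda_n)^k)$, which is what transporting information between approximants and limit requires, needs an additional condition such as $\limsup\gamma(T_n)>0$ (Lemma \ref{lemma3}), and indeed the paper's Propositions \ref{lem1} and \ref{lem4} assume it. Moreover, trivial intersection $R((T-\lambda)^k)\cap N(T-\lambda)=\{0\}$ is not stable under gap-small perturbations of the two subspaces unless they make a positive angle, and no compactness is available to extract limits of unit vectors in infinite dimensions, so ``one transports this triviality'' is not a step but the theorem itself. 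Finally, your forward direction, producing $\lambda_n\in\sigma_{asc}(T_n)$ with $\lambda_n\to\lambda$, is delegated to an unspecified ``grid'' selection which you yourself identify as the main obstacle; this is exactly what Proposition \ref{lem2} (and Proposition \ref{lem4} on the descent side) establishes via lower-convergence of kernels and ranges together with Proposition \ref{cont}. The closing remark that part (ii) follows ``by the dual argument'' also glosses over the fact that the descent case uses the different criterion $R(T-\lambda)+N((T-\lambda)^d)\subsetneq X$ and interchanges the roles of the upper- and lower-convergence hypotheses. As written, the proposal outlines what must be proved but does not prove it.
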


  Theorem \ref{T1} is based on the fact that the limit operator $T$ has a closed range. We do not need neither the commutativity of the operators $(T_n)_n$ nor the fact that $T_n$ has a closed range for every $n\in\mathbb{N}$. It suffices that that the range of $(T_n)_n$ is closed from some $p\in\mathbb{N}$.

Finally , we illustrate our theoretical results by an application. It might be said that our approache throughout this paper is purely algebraic.  This is different from the approache used  in \cite{hocine, isole }, which is based on the use of analytic functions and the SVEP condition.\\
Note that , the following parts are devoted to proving the main results (Theorem \ref{th1}, Theorem \ref{(C)}, Coorollary \ref{nov} Theorem \ref{T1}) and  as well as the application.
\section{Proof of the main results :}
\subsection{On the theory of subspectrum}
\subsubsection{Results assuring condition $C_1$}
\paragraph{Theorem \ref{th1}} : First, we demonstrate the following lemmas.
\begin{lemma}\label{theo3.4}
   Let $T$ and $S$ be two bounded linear operators satisfying $(H_1)$. Then, $T$ and $S$ have  finite ascents if and only if
  $TS$ has a finite ascent.
\end{lemma}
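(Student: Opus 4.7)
The plan is to exploit the direct-sum decomposition $N((TS)^p)=N(T^p)\oplus N(S^p)$ supplied by $(H_1)$ at every power $p$, so that both implications reduce to comparing when each of the three nested families of kernels stabilizes. Note that the commutation $ST=TS$ built into $(H_1)$ already ensures these families are increasing in $p$, and that $(TS)^p=T^pS^p=S^pT^p$.

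For the forward direction, I would assume $asc(T)=p$ and $asc(S)=q$, set $m=\max(p,q)$, and observe that for every $n\geq m$ one has $N(T^n)=N(T^m)$ and $N(S^n)=N(S^m)$. Applying $(H_1)$ at the exponents $n$ and $m$ then gives
\[ N((TS)^n)=N(T^n)\oplus N(S^n)=N(T^m)\oplus N(S^m)=N((TS)^m), \]
which yields $asc(TS)\leq m<\infty$.

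For the converse, suppose $asc(TS)=m$. Fix $n\geq m$ and $x\in N(T^n)$; the goal is to show $x\in N(T^m)$. By $(H_1)$ at the exponents $n$ and $m$, combined with the stabilization $N((TS)^n)=N((TS)^m)$,
\[ N(T^n)\oplus N(S^n)=N((TS)^n)=N((TS)^m)=N(T^m)\oplus N(S^m), \]
so $x$ decomposes as $x=y+z$ with $y\in N(T^m)\subseteq N(T^n)$ and $z\in N(S^m)\subseteq N(S^n)$. Then $z=x-y$ lies in $N(T^n)\cap N(S^n)$, and the directness of the sum $N(T^n)\oplus N(S^n)$ furnished by $(H_1)$ at $p=n$ forces $z=0$. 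Hence $x=y\in N(T^m)$, so $N(T^n)=N(T^m)$ for every $n\geq m$, i.e.\ $asc(T)\leq m$. The same argument with the roles of $T$ and $S$ swapped gives $asc(S)\leq m$.

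The only subtlety I anticipate lies in the converse: from an abstract identity $A\oplus B=C\oplus D$ with $C\subseteq A$ and $D\subseteq B$, one cannot in general deduce $A=C$ and $B=D$ (witness crossed lines in the plane). What rescues the argument is that the directness of the \emph{outer} sum $N(T^n)\oplus N(S^n)$ — the content of $(H_1)$ evaluated at the large exponent $n$, and not merely at the stabilization level $m$ — annihilates the cross term $z$. Identifying this use is really the heart of the proof; the rest is bookkeeping with nested kernels.
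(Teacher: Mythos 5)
Your proof is correct. Its converse half is essentially the paper's own argument: you decompose an element of $N(T^n)$ through the stabilization $N((TS)^n)=N((TS)^m)$ and the splitting $N(T^m)\oplus N(S^m)$, then kill the cross term using the directness of $N(T^n)\oplus N(S^n)$ at the \emph{large} exponent — exactly the step the paper performs (with the roles of $S$ and $T$ exchanged), and your closing remark correctly identifies that this is the crux. The only genuine divergence is in the forward implication: the paper never invokes the kernel decomposition there, arguing instead through the commutation chain $(TS)^p x=0\Rightarrow T^{n_0}(S^p x)=0\Rightarrow S^{n_0}(T^{n_0}x)=0\Rightarrow (TS)^{n_0}x=0$, so its forward half is valid for \emph{any} commuting pair, whereas you read it off from the identity $N((TS)^n)=N(T^n)\oplus N(S^n)$ and so use the full strength of $(H_1)$. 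Your version is a bit shorter, but the paper's extra generality is not cosmetic: the proof of Lemma \ref{monn} later applies precisely the direct assertion to $S-\lambda$ and $T-\lambda$, which are only assumed to commute, with the explicit remark that the direct implication ``is true for any two commutative operators''; your forward argument would not support that use without being replaced by the commutation-chain proof (or the weaker hypothesis being noted). A minor aside: the chains $N(A^p)$ are increasing in $p$ for any single operator, so attributing this to $ST=TS$ is unnecessary, though harmless.
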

\begin{proof}
Let $T$ and $S$ be respectively with finite ascents. Then, there exists $n_0\in\mathbb{N}$ such that for all $p\geq n_0$, we have
$$
\left\{
  \begin{array}{ll}
    T^px=0&\Rightarrow  T^{n_0}x=0 \\
& and\\
  S^px=0&\Rightarrow S^{n_0}x=0  .
  \end{array}
\right.
$$
On the other hand, we have $(TS)^px=0\Rightarrow T^p(S^px)=0\Rightarrow T^{n_0}(S^px)=0 \Rightarrow S^p(T^{n_0}x)=0\Rightarrow
S^{n_0}(T^{n_0}x)=0\Rightarrow (TS)^{n_0}x=0.
$ This  gives that $ TS$ is with finite ascent.
\par Concerning the inverse assertion, we have $TS$ is with finite ascent means that there exists $n_0\in\mathbb{N}$ such that for all $p\geq n_0$, we have :
\begin{equation}\label{charrouza}
  x\in N((TS)^p)\Rightarrow x\in N((TS)^{n_0}).
\end{equation}
Now, without loss of generality,  let $x\in N(S^p)$ such that $p\geq n_0$ and let us  show that $x\in N(S^{n_0})$. In fact, $x\in N(S^p)$ implies that
$x\in N((ST)^p)$. In view of (\ref{charrouza}), it yields that $x\in N((ST)^{n_0})$.
 Thus, by $(H_1)$, there exists $x_1\in N(S^{n_0})$ and $x_2\in N(T^{n_0})$ such that $x=x_1+x_2$. Since  $N(S^p)\subseteq N(S^{p+1})$
 and  $N(T^p)\subseteq N(T^{p+1})$ for all $p\in\mathbb{N}$ then $x_1\in N(S^p)$ and $x_2\in N(T^p)$. Furthermore, as $N(S^p)$ is a vector subspace and  $x_2=x-x_1$, it yields that $x_2\in N(S^p)\cap N(T^p)$. By  assumption $(H_1)$, we obtain  $x_2=0$. Hence, $x=x_1\in N(S^{n_0})$. This proves that $S$ is ascent.
\end{proof}
\begin{lemma}\label{monn}
 Let $S$ and $T$ be two bounded operators satisfying $ST=TS\in\tilde{\mathcal{F}}$ then :\\
 $$
  \sigma_{asc}(S+T)\setminus\{0\}\subseteq(\sigma_{asc}(S)\cup \sigma_{asc}(T))\setminus\{0\}.
 $$
\end{lemma}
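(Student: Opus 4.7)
The plan is to prove the contrapositive: for $\lambda \neq 0$, if both $S - \lambda$ and $T - \lambda$ have finite ascent, then so does $S + T - \lambda$. Setting $U := S - \lambda$ and $V := T - \lambda$, the commutativity $ST = TS$ gives $UV = VU$, and a direct computation yields the key identity
\[
\lambda(S + T - \lambda) \;=\; ST - UV.
\]
Since $\lambda \neq 0$, it suffices to establish that $A := ST - UV$ has finite ascent.

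I would exploit the commutativity of $ST$ and $UV$ to expand powers binomially:
\[
A^k \;=\; \sum_{j=0}^{k}\binom{k}{j}(-1)^{k-j}\,(ST)^j\,U^{k-j}V^{k-j}.
\]
The hypothesis $(ST)^{n_0}\in\mathcal{F}(X)$ forces every summand with $j\ge n_0$ to be of finite rank, and as a useful byproduct $ST$ itself has finite ascent and descent (the codimensions of the null-spaces $N((ST)^k)$ are non-increasing and bounded above by $\dim R((ST)^{n_0})$, and the decreasing ranges $R((ST)^k)$ live inside the finite-dimensional subspace $R((ST)^{n_0})$, hence stabilise). Meanwhile, once $k - j \geq \max(asc(U),asc(V))$, the kernels of $U^{k-j}$ and $V^{k-j}$ have already stabilised to their hyperkernels.

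Choosing $k$ of order $n_0 + asc(U) + asc(V)$, the plan is to invoke Proposition \ref{cont}(i) and verify that $R(A^k)\cap N(A^n)=\{0\}$ for every $n\in\mathbb{N}$. I expect the main obstacle to be precisely this last step: combining the finite-rank behaviour of the high-$j$ summands with the kernel-stability of the low-$j$ summands into a single clean estimate. The cleanest route I foresee is a Riesz-type spectral decomposition of $ST$ (available since $ST$ is power-compact, $(ST)^{n_0}$ being finite rank): on the finite-dimensional spectral subspace of $ST$ at its nonzero eigenvalues, $ST$ is invertible and the classical invertible-element principle (\ref{mul}) applies directly to $A$, while on the complementary closed $ST$-invariant subspace $ST$ is quasi-nilpotent and the stabilised kernels of $U$ and $V$ govern the behaviour of $A$. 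Patching these two regimes into a uniform ascent bound for $S + T - \lambda$, and in particular taking care that the decomposition is compatible with the $S$- and $T$-actions (ensured by commutativity with $ST$), is the delicate bookkeeping that completes the argument.
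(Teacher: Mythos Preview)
Your setup coincides with the paper's: you take $\lambda\neq 0$, pass to the contrapositive, set $U=S-\lambda$, $V=T-\lambda$, and derive the identity $\lambda(S+T-\lambda)=ST-UV$. From that point on, however, you head in a much harder direction and never actually finish. The binomial expansion of $A^k$ gives you information about $A^k$ as a \emph{sum} of operators, not about $R(A^k)$ or $N(A^n)$ separately, so it does not feed into Proposition~\ref{cont}(i) in any obvious way. Your fallback Riesz decomposition is sound as far as it goes (the spectral projections of the power-compact operator $ST$ do commute with $S$ and $T$), but on the complementary subspace $X_0$ where $ST$ is nilpotent you are left with exactly the original problem: show that $ST-UV$ has finite ascent given that $U,V$ have finite ascent and $ST$ is a commuting nilpotent. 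You label this ``delicate bookkeeping'' and stop; that is the whole content of the lemma, so the proof is genuinely incomplete.

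What you are missing are two short observations that the paper uses. First, the \emph{forward} implication of Lemma~\ref{theo3.4} requires only commutativity, not $(H_1)$: if $asc(U),asc(V)\le n_0$ and $UV=VU$, then $(UV)^p x=0\Rightarrow U^{n_0}V^p x=0\Rightarrow V^{n_0}U^{n_0}x=0$, so $asc(UV)\le n_0$. Hence $UV$ already has finite ascent. Second, since $ST\in\tilde{\mathcal F}$ commutes with $UV$ (and with $S+T-\lambda$), the Kaashoek--Lay perturbation theorem \cite[Theorem 2.2]{31-int} says that finite ascent passes from $UV$ to $UV-ST=-\lambda(S+T-\lambda)$, and hence to $S+T-\lambda$. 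Your Riesz-decomposition route is essentially an attempt to reprove this perturbation result by hand; citing it (and noticing the one-line product argument for $asc(UV)<\infty$) replaces all of the unfinished bookkeeping.
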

  \begin{proof}
 Let $\lambda\neq0$ and $\lambda\in\rho_{asc}(S)\cap\rho_{asc}(T)$, then :
$$
\left\{
  \begin{array}{ll}
    (S-\lambda)&\in Asc(B(X)) \\
    &and \\
     (T-\lambda)&\in Asc(B(X)).
  \end{array}
\right.
$$
Since $ST=TS$, then
$$
(S-\lambda)(T-\lambda)=(T-\lambda)(S-\lambda).
$$
By the direct assertion of Lemma \ref{theo3.4} which is true for any two commutative operators, we obtain that
$$
(S-\lambda)(T-\lambda)\in Asc(B(X)).
$$
As,
\begin{equation}\label{hi}
(S-\lambda)(T-\lambda)=ST-\lambda(S+T-\lambda),
\end{equation}
it yields that,
\begin{equation}\label{som1}
  ST-\lambda(S+T-\lambda)\in Asc(B(X)).
\end{equation}
Since,
$$
ST(S+T-\lambda)=(S+T-\lambda)ST\quad \hbox{and}\quad ST\in\tilde{\mathcal{F}},
$$
we obtain  by    \cite[Theorem  2.2]{31-int}, in view of (\ref{som1}), that $(S+T-\lambda)\in Asc(B(X))$. Consequently,
\begin{equation}\label{eq3.8}
  \sigma_{asc}(S+T)\setminus\{0\}\subseteq(\sigma_{asc}(S)\cup\sigma_{asc}(T))\setminus\{0\}.
\end{equation}
\end{proof}
\paragraph*{Proof of Theorem \ref{th1}}
The direct inclusion follows from Lemma \ref{monn}. To prove the reciprocal inclusion, let $\lambda\in
(\sigma_{asc}(S)\cup \sigma_{asc}(T))\setminus\{0\}$ and assume that $\lambda\notin \sigma_{asc}(S+T)\setminus\{0\}$ and $\lambda\notin\mathcal{R}$. Then, $S+T-\lambda\in Asc(B(X))$ and $(S-\lambda)$ and $(T-\lambda)$ satisfy $(H_1)$. Since, $ST=TS\in\tilde{\mathcal{F}}$, then by
  \cite[Theorem  2.2]{31-int}, we obtain  in view of (\ref{hi}) that :
$$
(S-\lambda)(T-\lambda)=(T-\lambda)(S-\lambda)\in Asc(B(X)).
$$
 As $(S-\lambda)$ and $(T-\lambda)$ satisfy $(H_1)$, it follows from Lemma \ref{theo3.4} that $(S-\lambda)$ and
 $(T-\lambda)$ are ascent, which is absurd. This proves the second inclusion.\qed
\paragraph{Theorem \ref{(C)}} :  In order to prove Theorem  \ref{(C)}, we first prove some auxilary resuls.
Denote by  $\mathcal{P}(X)$ the set of all projections $P\in B(X)$ such that $codim R(P)$ is finite. For $T\in B(X)$ and $P\in\mathcal{P}(X)$, the compression $T_P :R(P)\to R(P)$ is defined by $T_P(y)=PTy$, $y\in R(P)$,
i.e. $T_P=PT_{|R(P)}$ where $T_{|R(P)} : R(P)\to X$ is the restriction of $T$. Clearly, $R(P)$ is a Banach space and $T_P\in B(R(P)))$. 
\begin{lemma}\label{4.1} 
Let $T\in B(X)$ and $X$ be a direct sum of closed subspaces $X_1$ and $X_2$ which are $T-$invariant. If
$T_1=T_{|X_1} : X_1\to X_1$ and $T_2=T_{|X_2} : X_2\to X_2$ then the following statements hold true :\\
i)  $T$ has a finite  ascent if and only if $ T_1$ and $T_2$ have respectively finite ascents.\\
ii)  $T$ has a finite  descent if and only if $ T_1$ and $T_2$ have respectively finite descents.
\end{lemma}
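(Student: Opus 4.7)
The plan is to reduce both statements to a single structural observation: because $X_1$ and $X_2$ are $T$-invariant and $X=X_1\oplus X_2$, the iterates of $T$ split cleanly along the decomposition, so that for every $n\in\mathbb{N}$ one has
\begin{equation*}
N(T^n)=N(T_1^n)\oplus N(T_2^n)\quad\text{and}\quad R(T^n)=R(T_1^n)\oplus R(T_2^n).
\end{equation*}
I would first establish these two identities as a preliminary step. If $x=x_1+x_2$ with $x_i\in X_i$, then invariance gives $T^n x=T_1^n x_1+T_2^n x_2$, and the directness of the sum $X_1\oplus X_2$ forces $T^n x=0$ to be equivalent to $T_1^n x_1=0$ and $T_2^n x_2=0$, which yields the null-space identity. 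For the range identity, $R(T^n)=T^n(X_1)+T^n(X_2)=R(T_1^n)+R(T_2^n)$, and this sum is again direct because $R(T_i^n)\subseteq X_i$.

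Once these splittings are in hand, both parts become essentially automatic. For (i), since $N(T_i^n)\subseteq N(T_i^{n+1})$ and the sums are direct, the equality $N(T^m)=N(T^{m+1})$ holds if and only if $N(T_i^m)=N(T_i^{m+1})$ for $i=1,2$ simultaneously. Hence $\mathrm{asc}(T)$ is finite precisely when both $\mathrm{asc}(T_1)$ and $\mathrm{asc}(T_2)$ are, and in that case $\mathrm{asc}(T)=\max(\mathrm{asc}(T_1),\mathrm{asc}(T_2))$. The descent statement (ii) follows by the same argument applied to the range identity, using that $R(T_i^{n+1})\subseteq R(T_i^n)$.

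The only point that needs a little care — and which I expect to be the main (minor) obstacle — is making sure the sum $R(T_1^n)+R(T_2^n)$ is genuinely direct and that no element of $R(T^n)$ sneaks in through a mixed contribution. This is handled by the remark that $T^n$ respects the decomposition, so the $X_i$-component of $T^n x$ depends only on $x_i$. No closure, completeness, or continuity argument is required beyond the invariance hypothesis, so the proof is purely algebraic and quite short.
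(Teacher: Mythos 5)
Your proof is correct: the splittings $N(T^n)=N(T_1^n)\oplus N(T_2^n)$ and $R(T^n)=R(T_1^n)\oplus R(T_2^n)$ follow exactly as you argue from the $T$-invariance and the directness of $X_1\oplus X_2$, and the componentwise comparison (uniqueness of the decomposition turns equality of the direct sums into equality of each summand) yields both equivalences, indeed with $\mathrm{asc}(T)=\max(\mathrm{asc}(T_1),\mathrm{asc}(T_2))$ and the analogous identity for the descent. The paper itself states this lemma without any proof, so there is nothing to compare against; your argument is the standard purely algebraic one (closedness of $X_1$, $X_2$ and boundedness of $T$ play no role), which is presumably why the authors omitted it.
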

We prove the next result analogously as in \cite{zdr} :
\begin{lemma}\label{ca}
  For $T\in B(X)$ and $P\in\mathcal{P}(X)$, the following assertions hold :\\
 i) If $TP=PT$ then $T\in Asc(B(X))$ if and only if $T_P\in Asc(B(X))$.\\
 ii) If $TP=PT$ then $T\in Dsc(B(X))$ if and only if $T_P\in Dsc(B(X))$.
\end{lemma}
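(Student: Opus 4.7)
The plan is to reduce Lemma \ref{ca} to the preceding Lemma \ref{4.1} by using the commutation hypothesis $TP=PT$ to produce a $T$-invariant direct-sum decomposition of $X$, and then to exploit the finite-codimensionality of $R(P)$ to render the complementary piece trivial from the point of view of ascent and descent.

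First I would observe that since $P$ is a projection we have the algebraic and topological decomposition $X=R(P)\oplus N(P)$, with both summands closed. The assumption $TP=PT$ makes both $R(P)$ and $N(P)$ invariant under $T$: if $y=Py\in R(P)$ then $Ty=TPy=PTy\in R(P)$, and if $y\in N(P)$ then $PTy=TPy=0$, so $Ty\in N(P)$. Let $T_1=T_{|R(P)}$ and $T_2=T_{|N(P)}$. The key identification is $T_P=T_1$: indeed for $y\in R(P)$ one has $Ty\in R(P)$, hence $PTy=Ty$, so $T_Py=T_1y$. Thus Lemma \ref{4.1} applies and gives: $T$ has finite ascent (resp.\ descent) if and only if both $T_P$ and $T_2$ have finite ascent (resp.\ descent).

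Next, since $P\in\mathcal{P}(X)$, the subspace $N(P)$ is isomorphic to $X/R(P)$ and therefore finite-dimensional. On a finite-dimensional space every operator has finite ascent and finite descent, because the increasing chain $N(T_2^n)$ and the decreasing chain $R(T_2^n)$ are chains of subspaces of a finite-dimensional space and hence must stabilize. Consequently the condition on $T_2$ is automatic, and the biconditionals in i) and ii) follow at once from Lemma \ref{4.1}.

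I do not expect any genuine obstacle here; the lemma is essentially a bookkeeping result. The only point that requires a moment of care is distinguishing $T_P=PT_{|R(P)}$ from the plain restriction $T_{|R(P)}$, and verifying that under $TP=PT$ the two coincide as operators on $R(P)$. Once this identification is in place, the finite-dimensionality of $N(P)$ absorbs the complementary part and Lemma \ref{4.1} closes the argument for both ascent and descent simultaneously.
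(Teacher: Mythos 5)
Your proof is correct, but it takes a genuinely different (and more elementary) route than the paper. You use $TP=PT$ to make both $R(P)$ and $N(P)$ invariant under $T$ itself, observe that on $R(P)$ the compression $T_P=PT_{|R(P)}$ coincides with the plain restriction $T_{|R(P)}$, and then apply Lemma \ref{4.1} directly to the decomposition $X=R(P)\oplus N(P)$; the complementary block is an operator on the finite-dimensional space $N(P)\cong X/R(P)$, so its ascent and descent are automatically finite and the equivalence drops out. The paper instead follows \cite{zdr}: it decomposes not $T$ but $PTP$, whose matrix relative to $R(P)\oplus N(P)$ is $\mathrm{diag}(T_P,0)$, concludes via Lemma \ref{4.1} that $PTP$ has finite ascent (resp.\ descent) iff $T_P$ does, and then passes from $PTP$ to $T=PTP+PT(I-P)+(I-P)T$ by noting that the difference is a finite-rank operator commuting with $PTP$, invoking the perturbation theorems of Kaashoek--Lay \cite[Theorem 2.2]{31-int} for ascent and Burgos et al.\ \cite[Theorem 3.1]{burgos} for descent. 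Your argument buys self-containedness: under the stated hypothesis $TP=PT$ no external finite-rank commuting perturbation theorem is needed, only Lemma \ref{4.1} and the stabilization of subspace chains in finite dimensions. The paper's argument is heavier machinery for this particular statement, but it is the template that survives in settings where one only controls $PTP$ and a commuting finite-rank correction rather than a genuine $T$-invariant splitting. Both routes, of course, rest on Lemma \ref{4.1}, which the paper states without proof.
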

\begin{proof}
  i) Assume that $T\in B(X)$, $P\in\mathcal{P}(X)$ and $TP=PT$. Then, $X=R(P)\oplus N(P)$ and the subspaces $R(P)$ and $N(P)$ are invariant by $PTP\in B(X)$. The operator $PTP$ has the following matrix form :
  \begin{equation}\label{repre}
    PTP=\left(
          \begin{array}{cc}
            T_p & 0 \\
            0 & 0 \\
          \end{array}
        \right):\left(
                  \begin{array}{c}
                    R(P) \\
                    N(P) \\
                  \end{array}
                \right)\to\left(
                  \begin{array}{c}
                    R(P) \\
                    N(P) \\
                  \end{array}
                \right).
  \end{equation}
  From Lemma \ref{4.1} i) $($respectively, ii)$)$, it yields that $PTP$ is with finite ascent $($respectively, descent $)$ if and only if $T_P$ is
  with finite ascent $($respectively, descent $)$. Since,
  $$
  T=PT+(I-P)T=PTP+PT(I-P)+(I-P)T,
  $$
  and $PT(I-P)+(I-P)T$ is a finite rank operator commuting with $PTP$ it yields by \cite[ Theorem  2.2]{31-int}
  (respectively, \cite[Theorem 3.1]{burgos}), that $PTP$ is  with finite ascent (respectively. descent) if and only if  $T$ is  with finite ascent
  (respectively. descent).
  \end{proof}

.
\begin{lemma}\label{lemma3.5}
  Let $T$ and $S$ be two bounded linear operators such that $TS=ST$. Then,
  $$ \hbox{ $T$ and $S$ have  finite descents}\Rightarrow
  TS \hbox{ has a finite descent. }
  $$
\end{lemma}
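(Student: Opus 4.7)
The plan is to reduce the statement to the defining range equalities, exploiting commutativity to pass through the two factors independently.

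First, I would fix $n_{0}=\max(\mathrm{dsc}(T),\mathrm{dsc}(S))$, so that for every $p\geq n_{0}$ we have the stabilizations $R(T^{p})=R(T^{n_{0}})$ and $R(S^{p})=R(S^{n_{0}})$. Because $T$ and $S$ commute, $(TS)^{p}=T^{p}S^{p}=S^{p}T^{p}$, so ranges of powers of $TS$ split neatly through the two factors.

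The main computation is then to show $R((TS)^{n_{0}+1})=R((TS)^{n_{0}})$. I would write $R((TS)^{n_{0}+1})=T^{n_{0}+1}S^{n_{0}+1}X=T^{n_{0}+1}\bigl(R(S^{n_{0}+1})\bigr)=T^{n_{0}+1}\bigl(R(S^{n_{0}})\bigr)$ using $\mathrm{dsc}(S)\leq n_{0}$. Next, swap the order via commutativity: $T^{n_{0}+1}S^{n_{0}}X=S^{n_{0}}T^{n_{0}+1}X=S^{n_{0}}\bigl(R(T^{n_{0}+1})\bigr)=S^{n_{0}}\bigl(R(T^{n_{0}})\bigr)$ using $\mathrm{dsc}(T)\leq n_{0}$. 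Re-commuting a last time, this equals $T^{n_{0}}S^{n_{0}}X=R((TS)^{n_{0}})$. An induction on $p\geq n_{0}$ (or simply repeating the same shuffling argument for arbitrary $p$) then gives $R((TS)^{p})=R((TS)^{n_{0}})$ for all $p\geq n_{0}$, i.e. $\mathrm{dsc}(TS)\leq n_{0}<\infty$.

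There is no real obstacle here: the whole argument lives on the identity $T^{p}S^{q}=S^{q}T^{p}$ and on stabilization of ranges past $n_{0}$. The only point that requires a little attention is to keep track of which factor absorbs the extra power at each step, so that one can alternate between applying $\mathrm{dsc}(S)\leq n_{0}$ and $\mathrm{dsc}(T)\leq n_{0}$; but this is routine once the exponents are written out. Note that this lemma is exactly the descent analogue of the direct implication in Lemma \ref{theo3.4} and, combined with the hypotheses of Theorem \ref{(C)} for the converse, it will yield the equivalence $\mathrm{dsc}(TS)\leq n_{0}\Leftrightarrow \mathrm{dsc}(T)\leq n_{0}\text{ and }\mathrm{dsc}(S)\leq n_{0}$ needed in the sequel.
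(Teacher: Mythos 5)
Your argument is correct and is essentially the paper's own proof: both write $(TS)^{p}=T^{p}S^{p}$ via commutativity and shuffle the extra power between the factors using the stabilizations $R(T^{p+1})=R(T^{p})$ and $R(S^{p+1})=R(S^{p})$, exactly the chain $(ST)^{p}X=S^{p}T^{p}X=S^{p}T^{p+1}X=S^{p+1}T^{p+1}X=(ST)^{p+1}X$ appearing in the paper (where the middle step tacitly uses the same commutation swap you make explicit).
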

\begin{proof} Let $T$ and $S$ be respectively with finite descents. Then there exists $n_0\in\mathbb{N}$ such that for all $p\geq n_0$ we have
$$
\left\{
  \begin{array}{ll}
     T^p(X)&=T^{p+1}(X) \\
    & and \\
    S^p(X)&=S^{p+1}(X).
  \end{array}
\right.
$$
Furthermore, we have
  $$
  (ST)^p(X)=S^pT^p(X)=S^pT^{p+1}(X)=S^{p+1}T^{p+1}(X)=(ST)^{p+1}(X).
  $$
  Consequently, $ST$ has a finite descent.
  \end{proof}
\begin{lemma}\label{lemma3.6}
   Let $T$ and $S$ be two bounded linear operators satisfying $(H_1)$ and  $(H_2)$. Then, $T$ or $S$ has a finite descent.
    \end{lemma}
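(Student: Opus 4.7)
My strategy is to deduce finite descent of $T$ (the case for $S$ is entirely symmetric) from the finite descent of $ST$ by invoking the complementation criterion of Proposition \ref{cont}(ii). The plan is as follows.

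First, since $(H_2)$ gives $dsc(ST) = n_0 < \infty$, Proposition \ref{cont}(ii) applied to $ST$ yields, for each $n \in \mathbb{N}$, a subspace $Y_n \subseteq N((ST)^{n_0})$ such that $X = Y_n \oplus R((ST)^n)$. Hypothesis $(H_1)$ refines this since $N((ST)^{n_0}) = N(T^{n_0}) \oplus N(S^{n_0})$, so every element of $Y_n$ splits uniquely into components in $N(T^{n_0})$ and $N(S^{n_0})$. I also note the systematic inclusion $R((ST)^n) = S^n T^n(X) \subseteq R(T^n)$.

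Assume, without loss of generality, that $N(S^{n_0}) \subseteq R(T)$; the alternative branch of $(H_2)$ is treated symmetrically and yields $dsc(S) \leq n_0$. The crux of the argument is the intermediate claim
\[
N(S^{n_0}) \subseteq R(T^n) \qquad \text{for every } n \in \mathbb{N}.
\]
To prove this I would first observe that $N(S^{n_0})$ is $T$-invariant (since $ST=TS$), and then show that the restriction $T|_{N(S^{n_0})} : N(S^{n_0}) \to N(S^{n_0})$ is surjective; iterating this immediately gives the inclusion for every $n$. To obtain surjectivity of $T|_{N(S^{n_0})}$, for $x\in N(S^{n_0})$ write $x = T y_0$ using $N(S^{n_0})\subseteq R(T)$; then $S^{n_0}y_0\in N(T)$, and the problem reduces to finding $k\in N(T)$ with $S^{n_0}k = -S^{n_0}y_0$. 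Here $(H_1)$ enters as $N(T)\cap N(S^{n_0})=\{0\}$, making $S^{n_0}|_{N(T)}$ injective, while the descent relation $R((ST)^{n_0})=R((ST)^{n_0+1})$, together with commutativity, is used to produce the required preimage by exploiting the bijectivity of $ST$ on its eventual range $R((ST)^{n_0})$.

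Once the claim is established, each $y \in Y_n$ decomposes as $y = y_T + y_S$ with $y_T \in N(T^{n_0})$ and $y_S \in N(S^{n_0}) \subseteq R(T^n)$. Combined with $R((ST)^n) \subseteq R(T^n)$ and $X = Y_n + R((ST)^n)$, this gives $X = N(T^{n_0}) + R(T^n)$. Choosing an algebraic complement $Z_n$ of $N(T^{n_0}) \cap R(T^n)$ inside $N(T^{n_0})$ produces $X = Z_n \oplus R(T^n)$ with $Z_n \subseteq N(T^{n_0})$ for every $n$, and the reverse direction of Proposition \ref{cont}(ii) delivers $dsc(T) \leq n_0$, which is stronger than what the lemma requires.

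The main obstacle is the intermediate claim: establishing the surjectivity of $T|_{N(S^{n_0})}$ in an infinite-dimensional Banach space does not follow from injectivity alone, and it is at this point that one must carefully fuse $N(S^{n_0}) \subseteq R(T)$, the commutativity $ST=TS$, and the finite descent of $ST$ to produce the required preimage inside $N(T)$.
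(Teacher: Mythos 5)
Your proposal contains a genuine gap, and you name it yourself: the intermediate claim that $T|_{N(S^{n_0})}$ is surjective (equivalently $N(S^{n_0})\subseteq R(T^n)$ for all $n$) is never proved, only described as something one "must carefully fuse" the hypotheses to obtain. The sketch does not go through as indicated: finite descent of $ST$ gives $R((ST)^{n_0})=R((ST)^{n_0+1})$, i.e.\ surjectivity of $TS$ on its eventual range, not bijectivity, and nothing in $(H_1)$--$(H_2)$ produces the required element $k\in N(T)$ with $S^{n_0}k=-S^{n_0}y_0$; for that you would essentially need $N(T)\cap S^{n_0}(X)\subseteq S^{n_0}(N(T))$, which the hypotheses do not give. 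Injectivity of $S^{n_0}|_{N(T)}$ (from $(H_1)$) is of no help in an infinite-dimensional space, as you note. So the argument stalls exactly at its load-bearing step.

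The detour is also unnecessary, and this is where your route differs from the paper's. The paper uses Proposition \ref{cont}(ii) for $ST$ only with $n=1$: writing $x=(TS)x_1+x_2'+x_2''$ with $x_2'\in N(S^{n_0})$, $x_2''\in N(T^{n_0})$, and $x_2'=Ty_2$ by $(H_2)$, one gets $x=T(Sx_1+y_2)+x_2''$, i.e.\ $X=R(T)+N(T^{n_0})$ --- precisely what the first and last paragraphs of your plan already produce in the case $n=1$, without any claim about higher powers. From this single identity the conclusion follows at once: applying $T^{n_0}$ gives $R(T^{n_0})=T^{n_0}(X)=T^{n_0}(R(T))+T^{n_0}(N(T^{n_0}))=R(T^{n_0+1})$, hence $dsc(T)\leq n_0$ (equivalently, one extracts a complement of $N(T^{n_0})\cap R(T^n)$ inside $N(T^{n_0})$ once the descent has stabilized, to match the statement of Proposition \ref{cont}(ii) verbatim). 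If you replace your unproved claim by this observation, your argument becomes correct and is essentially the paper's proof; as it stands, the proposal is incomplete.
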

\begin{proof}
  Since $dsc(TS)=n_0$ is finite, we obtain by Proposition
  \ref{cont} that for every $x\in X$ there exists $w\in R(TS)$ and $x_2\in N((TS)^{n_0})$ satisfying :
   $$
   x=w+x_2.
   $$
  In other words, there exists $x_1\in X$ such that $w=TSx_1$ and $T^{n_0}S^{n_0}x_2=0$. By $(H_1)$, there exists $x_2'\in
  N(S^{n_0})$ and $x_2''\in N(T^{n_0})$  satisfying
   $$
   x=(TS)x_1+x_2'+x_2''.
   $$
   Without loss of generality, we assume that $N(S^{n_0})\subseteq R(T)$. Hence, there exists $y_2\in X$ such that $x_2'=Ty_2$. Put $y_1:=Sx_1$,
   it follows that
 $$
 x=T(y_1+y_2)+x_2''.
 $$
The result  follows from Proposition \ref{cont}.
  \end{proof}
\paragraph*{Proof of Theorem \ref{(C)}}
  Concerning the direct sense, it follows from Lemma \ref{lemma3.5}.\\
   Now, concerning the reciprocal sense, we obtain from Lemma \ref{lemma3.6} that $dsc(T)\leq n_0$ or $dsc(S)\leq n_0$.
Without loss of generality, we assume that $dsc(S)\leq n_0$. Using the fact that $TS$ has a finite descent, we obtain that for all $n\geq n_0$,
$$
(TS)^nX=(TS)^{n+1}X, \quad S^nX=S^{n+1}X=S^{n_0}X, \hbox{ and } TS=ST.
$$
This means that for all $n\geq n_0$,
\begin{equation}\label{33}
  T^nS^{n_0}X= T^{n+1}S^{n_0}X.
\end{equation}
Consider $P$, the project operator on $R(S^{n_0})$. One can remark that $codim R(P)<\infty$. Then, $P\in\mathcal{P}(X)$. Let $T_{P} : R(S^{n_0})\to
R(S^{n_0})$, defined by $T_{P} (y)=PT(y)$. Note that $TP=PT$  then, for all $n\geq n_0$, we have in view of (\ref{33}),
$$
T^n_P(X)=(TS^{n_0})^nX=T^{n+1}S^{n_0}X=(TS^{n_0})^{n+1}X=T_P^{n+1}(X),
$$
which means that $T_P$ has a finite descent. Hence, using Theorem \ref{ca}, $dsc(T)$ is finite.\qed
\paragraph{Proof of Corollary \ref{nov} :}
  i) Let $\lambda\neq0$ and $\lambda\in\rho_{dsc}(S)\cap\rho_{dsc}(T)$, then :
$$
\left\{
  \begin{array}{ll}
    (S-\lambda)&\in Dsc(B(X)) \\
    &and \\
     (T-\lambda)&\in Dsc(B(X)).
  \end{array}
\right.
$$
Since $ST=TS$, then
$$
(S-\lambda)(T-\lambda)=(T-\lambda)(S-\lambda).
$$
Using Lemma \ref{lemma3.5}, we obtain
$$
(S-\lambda)(T-\lambda)\in Dsc(B(X)).
$$
Since,
\begin{equation}\label{34}
(S-\lambda)(T-\lambda)=ST-\lambda(S+T-\lambda),
\end{equation}
we have,
\begin{equation}\label{somh}
  ST-\lambda(S+T-\lambda)\in Dsc(B(X)).
\end{equation}
Remark that,
$$
ST(S+T-\lambda)=(S+T-\lambda)ST\quad \hbox{and}\quad ST\in\tilde{\mathcal{F}}.
$$
It follows by \cite[Theorem 3.1]{burgos} and (\ref{somh}) that
$(S+T-\lambda)\in Dsc(B(X))$. Hence,
\begin{equation}\label{eq3.8}
  \sigma_{dsc}(S+T)\setminus\{0\}\subseteq(\sigma_{dsc}(S)\cup\sigma_{dsc}(T))\setminus\{0\}.
\end{equation}
ii) According to i) , we have :
\begin{equation}\label{premi}
\mathcal{M}\cup\mathcal{N}\cup \sigma_{dsc}(S+T)\setminus\{0\}\subseteq \mathcal{M}\cup\mathcal{N}\cup(\sigma_{dsc}(S)\cup
\sigma_{dsc}(T))\setminus\{0\}.
\end{equation}
Now, concerning the inverse inclusion, let $\lambda\in(\sigma_{dsc}(S)\cup\sigma_{dsc}(T))\setminus\{0\}$ and assume that $\lambda\notin
\sigma_{dsc}(S+T)\setminus\{0\}$, $\lambda\notin \mathcal{M}$ and $\lambda\notin \mathcal{N}$. Then, $S+T-\lambda\in Dsc(B(X))$. Since
$ST=TS\in\tilde{\mathcal{F}}$, then by \cite[Theorem 3.1]{burgos} and (\ref{34}), we obtain
$$
(S-\lambda)(T-\lambda)=(T-\lambda)(S-\lambda)\in Dsc(B(X)).
$$
On the other hand, since $\lambda\notin \mathcal{M}$ then $dsc((S-\lambda)(T-\lambda))=n_0$,
$codim(R(T-\lambda)^{n_0})$ and $codim(R(S-\lambda)^{n_0})$ are finite. Using the fact  that $(S+T-\lambda)\in Dsc(B(X))$ and $\lambda\notin\mathcal{N}$, it follows  that $(S-\lambda)$ and $(T-\lambda)$ satisfy $(H_1)$ and $(H_2)$. Hence,
 by Lemma \ref{(C)}, we obtain $dsc(S-\lambda)$ and $dsc(T-\lambda)$ are finite, which is absurd. Thus,
\begin{equation}\label{deusi}
  \mathcal{M}\cup\mathcal{N}\cup(\sigma_{dsc}(S)\cup\sigma_{dsc}(T))\setminus\{0\}\subseteq\mathcal{M}
  \cup\mathcal{N}\cup\sigma_{dsc}(S+T)\setminus\{0\}.
\end{equation}
The result follows from (\ref{premi}) and (\ref{deusi}). \qed
 \subsection{Results assuring condition $(C_2)$}
We, first, give the following concept of convergence of spaces.
\begin{definition}\label{def}
  Let $(E_n)_n$ be a sequence of normed subspaces of $X$.\\
i)  We say that $(E_n)_n$  upper-converges to a vector space $E$, and we write $u-\lim_{n\rightarrow\infty}E_n=E$, if $x$ is an adherent point  of a sequence $(x_n)_n\subset X$ such that $x_n\in E_n$ from some rank implies that $x\in E$.\\
ii)  We say that $(E_n)_n$  lower-converges to a vector space $E$ and we write $l-\lim_{n\rightarrow\infty}E_n=E$, if $x$ belongs to $E$ implies that $x$ is an adherent point of a sequence $(x_n)_n\subset X$ such that $x_n\in E_n$ from some rank.
\end{definition}
Let $X$ be a Banach space and let $T : X\rightarrow X$ be a non zero operator. We define the reduced minimum modulus of $T$ by
 $$
\gamma(T):=\inf\{\|Tx\|,; x\in X, dist (x, N(T))=1\}.
$$
Formally, we set $\gamma(0)=\infty$.\\
Let $Y$, $Z$ be subspaces of $X$ and define
$$
\delta(Y,Z) :=\sup_{x\in Y\\ \|x\|\leq 1}dist(x,Z).
$$
The gap $\widehat{\delta}(Y,Z)$ is defined by $\widehat{\delta}(Y,Z)=\max(\delta(Y,Z),\delta(Z,Y))$.\\
 To prove Theorem \ref{T1}, the main result of this subsection we need to prove next lemmas and propositions. Note that Proposition \ref{lem2},  Proposition \ref{lem1},   Proposition \ref{lem4} and Proposition \ref{lem1} summarize the resultat of Theorem \ref{T1}.
\begin{lemma}\label{lemmasup}
 i) Let $(E_n)_n$ be a sequence of normed subspaces of a Banach space $X$ , upper-convergent to a normed vector space $E$. Then $\delta(E_n,E)$ converges to $0$.\\
 ii) Let $(E_n)_n$ be a sequence of normed subspaces of a Banach sapace $X$, lower-convergent to a normed vector space $E$. Then $\delta(E,E_n)$ converges to $0$.\\
 iii) Let $(E_n)_n$  be a sequence of Banach subspaces, upper-convergent to  $\{0\}$. Then $E_n=\{0\}$ from some rank.\\
 iv) Let $(E_n)_n$  be a sequence of Banach subspaces, lower-convergent to $X$. Then $E_n=X$ from some rank.
\end{lemma}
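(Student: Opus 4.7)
My approach is to reduce (iii) and (iv) to (i) and (ii) via an elementary observation about the gap, and to establish (i) and (ii) by contradiction directly from the definition of upper/lower convergence.

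For (iii), if $E_n \neq \{0\}$ is a vector subspace then any non-zero vector normalises to a unit vector, so $\delta(E_n,\{0\}) = \sup_{x \in E_n,\,\|x\|\le 1}\|x\| \in \{0,1\}$; once (i) delivers $\delta(E_n,\{0\}) \to 0$, this quantity must equal $0$ from some rank, i.e.\ $E_n = \{0\}$. For (iv), the Riesz lemma gives $\delta(X,E_n) \ge 1-\eta$ for every $\eta > 0$ as soon as $E_n$ is a proper closed subspace of $X$; so (ii) forces $E_n = X$ from some rank.

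The core work lies in (i) and (ii). For (i), suppose by contradiction $\delta(E_n,E) \not\to 0$: one extracts $\varepsilon > 0$, a subsequence $(n_k)$, and $x_{n_k} \in E_{n_k}$ with $\|x_{n_k}\| \le 1$ and $dist(x_{n_k},E) \ge \varepsilon$. Completing to a full sequence $(\tilde x_n)$ by setting $\tilde x_n = 0$ for $n \notin \{n_k\}$ yields a sequence with $\tilde x_n \in E_n$ for every $n$ (since $0$ belongs to every vector subspace). Any adherent point of $(\tilde x_n)$ must lie in $E$ by upper convergence, whereas any adherent point produced from the subsequence $(x_{n_k})$ has distance at least $\varepsilon$ from $E$ by lower semicontinuity of $dist(\cdot,E)$ together with the closedness of $E$: this is the contradiction. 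For (ii), argue dually: if $\delta(E,E_n) \not\to 0$, pick $\varepsilon > 0$, a subsequence $(n_k)$, and $z_k \in E$ with $\|z_k\| \le 1$ and $dist(z_k,E_{n_k}) \ge \varepsilon$; extracting an adherent point $z \in E$ of $(z_k)$ and applying lower convergence to $z$ produces a sequence $(w_n)$ with $w_n \in E_n$ from some rank and a subsequence $w_{n_l} \to z$, so $dist(z,E_{n_l}) \to 0$; a diagonal matching of the two extractions then contradicts $dist(z_k,E_{n_k}) \ge \varepsilon$.

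The hard part will be guaranteeing the existence of an adherent point for the bounded sequences $(x_{n_k})$ in (i) and $(z_k)$ in (ii): in a general Banach space norm-boundedness is not enough to yield compactness, so one must either exploit additional structure (such as finite dimensionality of $E$, which is the situation of interest when $E = N(T)$ for an operator with closed range and finite ascent) or read "adherent point" in a sufficiently permissive sense, as the paper implicitly does. Once such a point is secured, the two contradictions close at once and (iii)--(iv) follow from (i)--(ii) as outlined.
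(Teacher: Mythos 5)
Your reduction of (iii) and (iv) to (i) and (ii) is correct and actually more self-contained than the paper's: the paper disposes of (iii)--(iv) by citing Theorem~17, p.~102 of \cite{Muller} (the gap theorem forcing dimension comparison when $\delta<1$), whereas your two observations --- that $\delta(E_n,\{0\})\in\{0,1\}$ by scaling, and that Riesz's lemma gives $\delta(X,E_n)\geq 1-\eta$ for every proper closed subspace $E_n$ --- do the same job with bare hands. For (i) and (ii), however, the paper offers no argument at all (``it is easy to prove i) and ii)''), and this is exactly where your proposal has a genuine gap, which you flag yourself but do not close: both of your contradiction arguments hinge on extracting a norm-adherent point of a bounded sequence, and in an infinite-dimensional Banach space no such point need exist.

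Moreover the missing step is not a technicality that a cleverer extraction could supply. Take $X=\ell^2$, $E_n=\mathrm{span}\{e_n\}$, $E=\{0\}$: any sequence $x_n\in E_n$ consists (eventually) of pairwise ``orthogonal'' vectors, so a norm-convergent subsequence forces $\|x_{n_k}\|\to 0$ and the only possible adherent point is $0\in E$; thus $(E_n)_n$ upper-converges to $\{0\}$ in the sense of Definition of the paper, while $\delta(E_n,E)=1$ for all $n$. Your scheme for (i) would have to produce an adherent point at distance $\geq\varepsilon$ from $E$, and none exists --- so the argument cannot be completed without extra hypotheses (the same example also shows that relying on finite-dimensionality of $E$ does not help, since here $E=\{0\}$). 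The situation for (ii) is analogous: even when $E$ is finite-dimensional so that $z$ can be extracted, lower convergence only yields a sequence $w_n\in E_n$ \emph{some} subsequence of which tends to $z$, and that subsequence need not meet the indices $n_k$ along which $\mathrm{dist}(z_k,E_{n_k})\geq\varepsilon$; for instance $E_{2n}=E$, $E_{2n+1}=\{0\}$ lower-converges to $E$ while $\delta(E,E_{2n+1})=1$, so the ``diagonal matching'' step fails. In short: (iii)--(iv) given (i)--(ii) is fine and nicely elementary, but the proposal does not establish (i) or (ii), and as the examples show, no argument from the stated definitions alone can, without a uniformity assumption that neither you nor the paper supplies.
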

\begin{proof}
It is easy to prove i) and ii). Concerning iii) and iv), it suffices to use i), ii) as well as Theorem 17 page 102 in \cite{Muller}.
\end{proof}
\begin{lemma}\label{lemma1}
  Let $(T_n)_n$ be a sequence of bounded linear operators convergent to $T$ in the operator norm. Then, $(N(T_n))_n$ upper-converges to $N(T)$.
\end{lemma}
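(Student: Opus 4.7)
The plan is to unfold the definition of upper-convergence directly and bound $\|Tx\|$ by splitting it against the null spaces of $T_n$.

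First I would take an arbitrary adherent point $x$ of a sequence $(x_n)_n \subset X$ with $x_n \in N(T_n)$ from some rank $n_0$. By the definition of adherent point, I can extract a subsequence $(x_{n_k})_k$ with $x_{n_k} \to x$ in $X$, and I may assume $n_k \geq n_0$ so that $T_{n_k} x_{n_k} = 0$ for all $k$. The goal is then to prove $Tx = 0$, which gives $x \in N(T)$.

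Next I would perform the standard add-and-subtract trick:
\begin{equation*}
Tx = (T - T_{n_k})x + T_{n_k}(x - x_{n_k}) + T_{n_k} x_{n_k} = (T - T_{n_k})x + T_{n_k}(x - x_{n_k}).
\end{equation*}
The first term is bounded in norm by $\|T - T_{n_k}\|\,\|x\|$, which tends to $0$ since $T_n \to T$ in operator norm. For the second term, norm convergence of $(T_n)_n$ guarantees that $(\|T_n\|)_n$ is bounded, say by a constant $M$, so that $\|T_{n_k}(x - x_{n_k})\| \leq M\,\|x - x_{n_k}\| \to 0$. Letting $k\to\infty$ yields $\|Tx\| = 0$, hence $Tx = 0$ and $x \in N(T)$.

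This establishes $u\text{-}\lim_{n\to\infty} N(T_n) \subseteq N(T)$, which is precisely the definition of upper-convergence in Definition~\ref{def}. I do not anticipate a real obstacle here: the whole argument rests on the uniform boundedness of $\|T_n\|$ (immediate from operator-norm convergence) and on passing to a convergent subsequence in the definition of adherent point. The only mild subtlety worth mentioning explicitly is that one must use a subsequence rather than the full sequence, since adherence does not force convergence of $(x_n)_n$ itself; this is handled by the extraction step above.
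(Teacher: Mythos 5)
Your proof is correct and follows essentially the same route as the paper: the standard add-and-subtract decomposition of $Tx$ combined with $T_{n_k}x_{n_k}=0$ and operator-norm convergence to kill the remaining terms. Your particular arrangement (bounding $\|T_{n_k}(x-x_{n_k})\|$ by $M\|x-x_{n_k}\|$ rather than estimating $\|(T-T_{n_k})x_{n_k}\|$ as the paper does) is in fact slightly tidier, since it avoids having to remark that $\|x_{n_k}\|$ stays bounded along the subsequence.
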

\begin{proof} Let $(x_n)_n\subset X$ such that $x_n\in N(T_n)$ from  some rank. Next, we  prove that for every adherent point $x$ of $(x_n)_n$, we have $x\in N(T)$. Indeed, for every $n\in\mathbb{N}$
\begin{eqnarray}\label{chi}
  \|Tx\| &=& \|Tx+Tx_n-Tx_n+T_nx_n-T_nx_n\| \nonumber\\
   &\leq& \|T\|\|x-x_n\|+\|(T-T_n)x_n\|+\|T_nx_n\|.
\end{eqnarray}
 Since $x_n\in N(T_n)$ from some rank $n_0\in\mathbb{N}$, then  $\|T_nx_n\|=0$, for all $n\geq n_0$. Let $\varepsilon>0$, for all  $N\in\mathbb{N}$ there exists $n\geq N$, such that $\|x_n-x\|<\varepsilon$.   Besides, there is $n_1\in\mathbb{N}$ such that for every $n\geq n_1$, we have $\|(T-T_n)x\|<\varepsilon$ for all $x\in X$. Thus, we obtain by (\ref{chi}) that for all $N_0=\sup(n_0, n_1)$ there exists $n\geq N_0$ satisfying $\|Tx\|<(\|T\|+1)\varepsilon$. Consequently, $x\in N(T)$.
\end{proof}
\begin{lemma}\label{lemma 5}
  Let $(T_n)_n$ be a sequence of bounded linear operators convergent to $T$ in the operator norm. Assume that $T$ has a closed range and for all $x$ in $N(T)$, $dist(x,N(T_n))$ is reached for some rank, then $(N(T_n))_n$ lower-converges to $N(T)$.
\end{lemma}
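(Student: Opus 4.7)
The plan is to verify the defining condition of lower-convergence: for every $x\in N(T)$, I must exhibit a sequence $(x_n)_n$ with $x_n\in N(T_n)$ from some rank on, such that $x$ is an adherent point of $(x_n)_n$. Given $x\in N(T)$, the natural choice, made available precisely by the attainment hypothesis, is to take $x_n\in N(T_n)$ realizing the distance, i.e.\ $\|x-x_n\|=dist(x,N(T_n))$ for all $n\ge n_0$. The goal then reduces to showing that $\|x-x_n\|\to 0$, so that $x$ is in fact the limit and \emph{a fortiori} an adherent point of the sequence.

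Two elementary reductions set up the analytic heart of the argument. First, since $0\in N(T_n)$, we have $dist(x,N(T_n))\le\|x\|$, so $(x_n)$ is bounded by $2\|x\|$. Second, using $T_n x_n=0$ together with the operator-norm convergence $T_n\to T$,
\[
\|Tx_n\|=\|(T-T_n)x_n\|\le\|T-T_n\|\cdot\|x_n\|\le 2\|x\|\cdot\|T-T_n\|\longrightarrow 0.
\]
At this point the closed-range hypothesis enters decisively: since $R(T)$ is closed, the reduced minimum modulus $\gamma(T)$ is strictly positive, and the standard lower bound $\|Ty\|\ge \gamma(T)\,dist(y,N(T))$ applied to $y=x_n$ yields $dist(x_n,N(T))\to 0$. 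One can then choose $u_n\in N(T)$ with $\|x_n-u_n\|\to 0$.

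The principal obstacle, and the place where the attainment hypothesis truly earns its keep, is the concluding step of translating ``$x_n$ is eventually close to $N(T)$'' into ``$x_n$ is eventually close to $x$ itself''. My strategy is to build a competitor in $N(T_n)$ against which the minimality of $x_n$ can be turned to advantage: since $x-u_n$ still belongs to $N(T)$, the attainment hypothesis produces a nearest point $v_n\in N(T_n)$ with $\|x-u_n-v_n\|=dist(x-u_n,N(T_n))$. Using that $N(T_n)$ is a linear subspace, the sum $v_n+x_n$ lies in $N(T_n)$, and a triangle-inequality computation combining $\|x_n-u_n\|\to 0$ with the distance estimate just obtained bounds $\|x-(v_n+x_n)\|$ in terms of vanishing quantities. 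Invoking the minimality $\|x-x_n\|\le\|x-(v_n+x_n)\|$ then forces $\|x-x_n\|\to 0$, establishing the desired lower-convergence. I expect the delicate bookkeeping in this last step, especially the propagation of smallness through the auxiliary element $u_n$, to be the trickiest part of the proof.
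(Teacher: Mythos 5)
Your preparatory steps are fine: the minimizers $x_n\in N(T_n)$ exist by the attainment hypothesis, $\|x_n\|\le 2\|x\|$, $\|Tx_n\|=\|(T-T_n)x_n\|\to 0$, and since $R(T)$ closed gives $\gamma(T)>0$, indeed $\mathrm{dist}(x_n,N(T))\to 0$, so $u_n\in N(T)$ with $\epsilon_n:=\|x_n-u_n\|\to 0$ can be chosen. The genuine gap is exactly in the step you flag as delicate. Your competitor argument yields $\|x-x_n\|\le \|x-u_n-v_n\|+\|u_n-x_n\|=d_n+\epsilon_n$ with $d_n:=\mathrm{dist}(x-u_n,N(T_n))$, and you never show $d_n\to 0$: the attainment hypothesis only produces a minimizer $v_n$, it says nothing about the size of the minimum. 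Since $x-u_n$ is again an element of $N(T)$, bounding its distance to $N(T_n)$ is precisely the lower-convergence statement you are trying to prove, so the argument is circular; the only bound available from your ingredients is $d_n\le\|x-u_n\|\le\|x-x_n\|+\epsilon_n$, which collapses your chain to the vacuous inequality $\|x-x_n\|\le\|x-x_n\|+2\epsilon_n$.

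Moreover, no bookkeeping can close this step from the facts you use, because they are all compatible with the kernels $N(T_n)$ collapsing: take $X=\mathbb{C}^2$, $T=\mathrm{diag}(1,0)$, $T_n=\mathrm{diag}(1,1/n)$ and $x=(0,1)\in N(T)$; then $N(T_n)=\{0\}$, so $x_n=u_n=v_n=0$ and every quantity you proved small is exactly $0$, yet $\mathrm{dist}(x,N(T_n))=1$ for all $n$. The ingredient that is actually needed is a bound on $\delta(N(T),N(T_n))$, i.e. that the unit ball of $N(T)$ is eventually uniformly close to $N(T_n)$; this is what the paper's proof imports from the gap theorem (Theorem 17, p.~102 of M\"uller), invoked via the closed range of $T$, the attainment hypothesis being used there only afterwards to select the nearest points $y_n$. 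Note also that the diagonal example above satisfies the hypotheses as literally stated (with $\gamma(T_n)\to 0$), which shows that your route cannot be repaired without an additional ingredient controlling the $T_n$ themselves, such as the condition $\limsup_n\gamma(T_n)>0$ appearing elsewhere in the paper.
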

\begin{proof}
  By Lemma \ref{lemmasup} i) and  Lemma \ref{lemma1}, $\delta(N(T_n), N(T))$ tends to $0$ as $n\rightarrow\infty$. Using  Theorem 17 page 102 in \cite{Muller}, we have that if $T$ is with closed range then $\delta(N(T), N(T_n))$ tends to $0$ as $n\rightarrow\infty$. This implies that
$$
\sup\limits_{x\in N(T),\ \|x\|\leq1}\inf\limits_{y\in N(T_n)}\|x-y\| \hbox{ tends to $0$ as }n\rightarrow\infty,
$$
 which implies that
 $$
 \forall x\in N(T), \|x\|\leq 1, \inf\limits_{y\in N(T_n)}\|x-y\| \hbox{ tends to $0$ as $n\rightarrow\infty$. }
 $$
  That is, for all $x\in N(T)$ and $\|x\|\leq 1$,  there exists $n_0\in\mathbb{N}$ such that for all $n\geq n_0$ there is $y_n\in N(T_n)$ satisfying $\|x-y_n\|=\inf\limits_{y\in N(T_n)}\|x-y\|$.
\end{proof}
\begin{lemma}\label{lemma3}
   Let $(T_n)_n$ be a sequence of bounded linear operators convergent to $T$ in the operator norm. Assume that $\limsup\gamma(T_n)>0$. Then, $(R(T_n))_n$ upper-converges to $R(T)$.
\end{lemma}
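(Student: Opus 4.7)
The plan is to pick an arbitrary adherent point $y$ of a sequence $(y_n)_n$ with $y_n\in R(T_n)$ from some rank and to exhibit an $x\in X$ with $Tx=y$. Exploiting $\limsup\gamma(T_n)>0$, I would first fix a constant $c>0$ and extract a subsequence $(n_k)$ realizing both the convergence $y_{n_k}\to y$ and the uniform lower bound $\gamma(T_{n_k})\geq c$. For each $k$, write $y_{n_k}=T_{n_k}u_k$ and replace $u_k$ by a near-best representative of its coset modulo $N(T_{n_k})$, that is $x_k=u_k-v_k$ with $v_k\in N(T_{n_k})$ and $\|u_k-v_k\|\leq 2\,\mathrm{dist}(u_k,N(T_{n_k}))$. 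Then $T_{n_k}x_k=y_{n_k}$ and the definition of $\gamma$ gives
$$\|x_k\|\leq 2\,\mathrm{dist}(x_k,N(T_{n_k}))\leq \frac{2\|T_{n_k}x_k\|}{\gamma(T_{n_k})}\leq \frac{2\|y_{n_k}\|}{c},$$
so $(x_k)$ is bounded. Since $Tx_k=y_{n_k}+(T-T_{n_k})x_k$ and $\|T-T_{n_k}\|\to 0$, this yields $Tx_k\to y$, hence $y\in\overline{R(T)}$.

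The heart of the argument is upgrading this to $y\in R(T)$, which amounts to proving $R(T)$ closed, equivalently $\gamma(T)>0$. I would proceed by contradiction: if $\gamma(T)=0$, choose $z_n\in X$ with $\|z_n\|=1$, $\mathrm{dist}(z_n,N(T))\geq 1/2$ and $Tz_n\to 0$. Lemma~\ref{lemma1} combined with Lemma~\ref{lemmasup}~i) yields $\delta(N(T_j),N(T))\to 0$ as $j\to\infty$. For each $n$, pick $j=j(n)$ inside the good subsequence (so that $\gamma(T_{j(n)})\geq c$) and large enough that $\|T-T_{j(n)}\|\leq 1/n$ and $\delta(N(T_{j(n)}),N(T))\leq 1/n$. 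Then $\|T_{j(n)}z_n\|\leq\|Tz_n\|+1/n\to 0$, whence $\mathrm{dist}(z_n,N(T_{j(n)}))\leq\|T_{j(n)}z_n\|/c\to 0$; selecting $w_n\in N(T_{j(n)})$ close to $z_n$ and, thanks to $\delta(N(T_{j(n)}),N(T))\leq 1/n$, an element $u_n\in N(T)$ close to $w_n/\|w_n\|$, a triangle-inequality chain forces $\mathrm{dist}(z_n,N(T))\to 0$, contradicting $\mathrm{dist}(z_n,N(T))\geq 1/2$.

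Once $\gamma(T)>0$ is established, $\overline{R(T)}=R(T)$ and hence $y\in R(T)$; equivalently, the bound $\gamma(T)\,\mathrm{dist}(x_k-x_\ell,N(T))\leq\|T(x_k-x_\ell)\|\to 0$ shows that $(x_k+N(T))_k$ is Cauchy in the Banach space $X/N(T)$, whose limit $x+N(T)$ directly satisfies $Tx=y$. The main obstacle I anticipate is precisely the step producing $\gamma(T)>0$: one must synchronize on a single subsequence $j(n)$ the three independent approximations $T_j\to T$ in norm, $\delta(N(T_j),N(T))\to 0$ coming from Lemma~\ref{lemma1}, and the positivity of $\limsup\gamma(T_j)$; the $\limsup$ (rather than $\liminf$) formulation of the hypothesis makes this bookkeeping slightly delicate and is the only place where the interplay between operator convergence and kernel convergence is essentially used.
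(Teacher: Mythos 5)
Your quantitative core is fine (the bound $\gamma(S)\,\mathrm{dist}(x,N(S))\le\|Sx\|$ to control representatives, and the Cauchy argument in $X/N(T)$ at the end), and since the paper's own ``proof'' of Lemma~\ref{lemma3} is nothing but the citation of Corollary~19, p.~103 of \cite{Muller}, a self-contained argument would be welcome. But your very first step is a genuine gap, not the ``slightly delicate bookkeeping'' you describe: from $\limsup_n\gamma(T_n)>0$ you cannot in general extract one subsequence along which simultaneously $y_{n_k}\to y$ and $\gamma(T_{n_k})\ge c$, because the adherent point $y$ is only approached along some subsequence of indices, and that index set may be disjoint from the set where $\gamma(T_n)\ge c$. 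Worse, under the paper's Definition~\ref{def} (every cluster point of every sequence $y_n\in R(T_n)$ must lie in $R(T)$) the conclusion can actually fail with only a $\limsup$ hypothesis, so no argument can close this gap: take $X=\ell^2$, $T=P_1$ the orthogonal projection onto $\mathbb{C}e_1$, and $T_n=P_1$ for $n$ even, $T_n=P_1+\frac{1}{n}P_2$ for $n$ odd (with $P_2$ the projection onto $\mathbb{C}e_2$). Then $\|T_n-T\|\to 0$ and $\gamma(T_n)=1$ for even $n$, so $\limsup_n\gamma(T_n)=1$, yet $y_n:=e_2\in R(T_n)$ for odd $n$, $y_n:=e_1$ for even $n$, has the cluster point $e_2\notin R(T)=\mathbb{C}e_1$. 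Your argument does become correct if the hypothesis is read as $\liminf_n\gamma(T_n)>0$, or if ``adherent point'' is taken to mean the limit of the whole sequence (then every subsequence inherits the convergence and your synchronization is automatic); that is effectively the form in which the result cited from \cite{Muller} has to be invoked.

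A second, smaller weakness sits in your closed-range step. You obtain $\delta(N(T_{j}),N(T))\to 0$ from Lemma~\ref{lemma1} together with Lemma~\ref{lemmasup}~i), but this is precisely the direction of kernel-gap convergence that is not free: the honest estimate is $\delta(N(S),N(T))\le\|S-T\|/\gamma(T)$, which presupposes $\gamma(T)>0$ --- exactly what you are trying to prove --- and without closedness of $R(T)$ the statement $\delta(N(T_n),N(T))\to 0$ is simply false in general (e.g.\ $T=\mathrm{diag}(1,\tfrac12,\tfrac13,\dots)$ on $\ell^2$ and $T_n$ equal to $T$ except that it annihilates $e_n$; here $\|T_n-T\|\to0$ but $\delta(N(T_n),N(T))=1$). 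So this part of your proof stands or falls with Lemma~\ref{lemmasup}~i) as stated in the paper (whose ``easy'' proof is itself problematic in infinite dimensions, since upper convergence controls only norm-convergent subsequences). To be genuinely self-contained you would need an independent proof that $T_{j}\to T$ with $\gamma(T_{j})\ge c$ forces $R(T)$ closed --- in Hilbert space this follows from the spectral inclusion $\mathrm{spec}(|T|)\subset\{x:\ \mathrm{dist}(x,\mathrm{spec}(|T_j|))\le\|T_j-T\|\}$, and in Banach spaces it is exactly the nontrivial content of the corollary of \cite{Muller} that the paper cites instead of proving.
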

\begin{proof} See Corollary 19 page 103 in \cite{Muller}.\end{proof}
\begin{lemma}\label{lemma3'}
   Let $(T_n)_n$ be a sequence of bounded linear operators convergent to $T$ in the operaor norm. Then, $(R(T_n))_n$ lower-converges to $R(T)$.
\end{lemma}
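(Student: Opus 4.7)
The plan is to unwind the definition of lower-convergence and exploit the operator norm convergence directly. By Definition \ref{def} ii), I need to show that for every $x \in R(T)$ there is a sequence $(x_n)_n \subset X$ with $x_n \in R(T_n)$ from some rank such that $x$ is an adherent point of $(x_n)_n$.

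First I would fix an arbitrary $x \in R(T)$ and pick a preimage $y \in X$ with $Ty = x$. The natural candidate sequence is $x_n := T_n y$, which manifestly belongs to $R(T_n)$ for every $n \in \mathbb{N}$. Then I would estimate
\begin{equation*}
\|x_n - x\| \;=\; \|T_n y - T y\| \;\leq\; \|T_n - T\|\,\|y\|,
\end{equation*}
and the operator norm convergence $\|T_n - T\| \to 0$ yields $x_n \to x$. In particular $x$ is an adherent point of $(x_n)_n$, so $R(T) \subset l\text{-}\lim_{n\to\infty} R(T_n)$, which is what Definition \ref{def} ii) requires.

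There is essentially no obstacle here: the proof uses only the definition of range and the continuity of the evaluation map $S \mapsto Sy$ with respect to the operator norm. Unlike Lemma \ref{lemma 5}, we do not need a closed range hypothesis on $T$ or a reduced minimum modulus condition, because we are not trying to approximate preimages --- we already have a preimage $y$ of $x$ under $T$, and $T_n y$ automatically lies in $R(T_n)$. The proof can therefore be written in a few lines.
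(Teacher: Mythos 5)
Your proof is correct and follows essentially the same route as the paper: pick a preimage $y$ with $Ty=x$, take $x_n=T_ny\in R(T_n)$, and use $\|T_ny-Ty\|\leq\|T_n-T\|\,\|y\|\to 0$. Nothing further is needed.
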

\begin{proof} Consider,
$$
F=\left\{(y_n)_n\subset X \hbox{ such that }y_n\in R(T_n) \hbox{ from some rank} \right\}.
$$
 Assume that $y\in R(T)$. We will prove that $y$ is a limit of a sequence $(y_n)_n$ belonging to $F$. In fact, $y\in R(T)$ means that there exists $x\in X$ satisfying $Tx=y$.
Consider the sequence $y_n=T_nx$, from some rank $n_0\in\mathbb{N}$. Since $T_n$ converges to $T$ in the operator norm , then for all $\varepsilon>0$, there exists $n_1\in\mathbb{N}$ such that every $n\geq n_1$, $\|Tx-T_nx\|<\varepsilon$. That is $\|y-y_n\|<\varepsilon$. Hence, $(y_n)_n$ converges to $y$ and $(y_n)_n\in F$.
\end{proof}
\begin{proposition}\label{lem2}Let $(T_n)_{n\in\mathbb{N}}$ be a sequence of bounded linear operators convergent to $T$ in the operator norm. Assume that $T$ has a closed range and for all $x$ in $N(T)$, $dist(x,N(T_n))$ is reached from some rank. If $(\lambda_n)_n$ is a sequence convergent to $\lambda\in\sigma_{asc}(T)$ then $\lambda_n\in \sigma_{asc}(T_n)$ from some rank.
\end{proposition}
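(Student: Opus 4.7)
The plan is to argue by contraposition: assume $\lambda \in \sigma_{asc}(T)$ and $\lambda_n \to \lambda$, but that after extracting a subsequence (still indexed by $n$) one has $\lambda_n \notin \sigma_{asc}(T_n)$ for every $n$. Writing $S = T - \lambda$ and $S_n = T_n - \lambda_n$, the convergences $T_n \to T$ in operator norm and $\lambda_n \to \lambda$ imply $S_n \to S$ in operator norm, while by assumption $m_n := asc(S_n) < \infty$ for every $n$ and $asc(S) = \infty$. I would derive a contradiction by splitting according to whether the integer sequence $(m_n)_n$ is bounded.

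If $(m_n)_n$ is bounded, a further extraction gives $m_n = m$ constant, so that $N(S_n^m) = N(S_n^{m+1})$ for all $n$. Picking any $x \in N(S^{m+1})$, the crucial step is to produce a sequence $(x_n)_n$ with $x_n \in N(S_n^{m+1})$ and $x_n \to x$; this is precisely the lower-convergence of $N(S_n^{m+1})$ to $N(S^{m+1})$ supplied by Lemma \ref{lemma 5} applied to $S_n^{m+1} \to S^{m+1}$. Since $x_n \in N(S_n^m)$ by stabilisation, $S_n^m x_n = 0$, and the triangle inequality
$$
\|S^m x\| \leq \|S^m - S_n^m\|\,\|x_n\| + \|S_n^m x_n\| + \|S^m\|\,\|x_n - x\|
$$
forces $S^m x = 0$ upon letting $n \to \infty$. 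Hence $N(S^{m+1}) \subseteq N(S^m)$, so $asc(S) \leq m$, contradicting $\lambda \in \sigma_{asc}(T)$.

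The remaining case $m_n \to \infty$ is the main obstacle. The natural direct route --- for each fixed $p$, lifting $y_p \in N(S^{p+1}) \setminus N(S^p)$ to $y_{p,n} \in N(S_n^{p+1})$ via Lemma \ref{lemma 5}, and using $\|S_n^p y_{p,n} - S^p y_p\| \to 0$ together with $S^p y_p \neq 0$ to conclude $y_{p,n} \notin N(S_n^p)$ eventually --- only yields $asc(S_n) > p$ from some rank depending on $p$, and does not by itself establish $asc(S_n) = \infty$. To close this gap I would combine the closed-range hypothesis on $T$ with the reduced minimum modulus estimates of Lemma \ref{lemma3} and the gap-function bounds of Lemma \ref{lemmasup} to obtain a uniform lower bound on $\gamma(S_n)$ that forces $(m_n)$ to be bounded, thereby reducing the second case to the first. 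The principal technical difficulties are thus this quantitative control of the ascent indices, and the propagation of the standing closed-range and distance-reaching hypotheses from $T$ to the powers $(T - \lambda)^p$ required to invoke Lemma \ref{lemma 5} beyond its first-power formulation.
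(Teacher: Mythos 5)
Your argument breaks down in the case $m_n\to\infty$, and the fix you sketch for it does not work. A uniform lower bound on the reduced minimum modulus $\gamma(S_n)$ cannot force the ascent indices $m_n=asc(S_n)$ to stay bounded: nilpotent truncated-shift blocks of order $k$ have $\gamma=1$ for every $k$, so a sequence with $\gamma(S_n)\geq c>0$ may perfectly well have finite but unbounded ascents. The reduced minimum modulus controls closedness of ranges and their gap-convergence (that is what Lemma \ref{lemma3} uses it for), not ascent. Hence the second case is left unproved, and it is precisely the case where the quantifier problem you correctly isolated (``$asc(S_n)>p$ only from a rank depending on $p$'') lives, so the contraposition scheme does not close. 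In addition, even your bounded case invokes Lemma \ref{lemma 5} for $S_n^{m+1}\to S^{m+1}$, whose hypotheses (closed range of $(T-\lambda)^{m+1}$ and the distance-reaching condition for $x\in N((T-\lambda)^{m+1})$ with respect to $N((T_n-\lambda_n)^{m+1})$) are not consequences of the standing assumptions, which concern only $T$, $N(T)$ and $N(T_n)$; you flag this propagation issue but do not resolve it.

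For comparison, the paper does not argue by contraposition or by controlling the ascent indices at all: it rewrites $\lambda\in\sigma_{asc}(T)$ via Proposition \ref{cont} as the nontriviality of $R((T-\lambda)^d)\cap N(T-\lambda)$ for suitable $d$, transfers this nontriviality to $R((T_n-\lambda_n)^d)\cap N(T_n-\lambda_n)$ using the lower-convergence statements (Lemma \ref{lemma 5} for kernels, Lemma \ref{lemma3'} for ranges) together with Lemma \ref{lemmasup} ii) (a lower-converging family whose limit space is nonzero must be nonzero from some rank), and then applies Proposition \ref{cont} again to $T_n-\lambda_n$. So the intended route stays at the level of the first kernel $N(T-\lambda)$ and ranges of powers, rather than kernels of powers and the sequence $asc(S_n)$. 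The uniformity difficulty you identified (the rank from which the condition holds depends on $d$) is a real delicate point of this statement, but your proposal, as written, neither follows the paper's route nor completes its own.
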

\begin{proof}
Let $\lambda\in\sigma_{asc}(T)$. By proposition \ref{cont}, this is equivalent to say:
\begin{equation}\label{etoil}
\hbox{ For all $m\geq 0$, there exists $d \geq m$ satisfying $R((T-\lambda)^d)\cap N(T-\lambda)\neq\{0\}$}.
\end{equation}
Since  $T_n-\lambda_n$ converges to $T-\lambda$ in the operator norm, then
by Lemma \ref{lemma 5} $($respectively Lemma \ref{lemma3'}$)$, $R(T-\lambda)^d$ $($respectively $N(T-\lambda)$$)$ is the $l-limit$ of the sequence $R(T_n-\lambda_n)^d$ $($respectively  $N(T_n-\lambda_n)$$)$. Thus, (\ref{etoil}) is equivalent to say that,
\begin{equation}\label{en-n}
  l-\lim\limits_{n\to\infty}\left( N(T_n-\lambda_n) \cap R((T_n-\lambda_n)^d)\right) \neq\{0\}.
\end{equation}
From (\ref{en-n}) and Lemma \ref{lemmasup} ii),
$$
  \hbox{ for every $m\geq 0$ there exists $d\geq m$ satisfying } N(T_n-\lambda_n) \cap R((T_n-\lambda_n)^d) \neq\{0\},\hbox{  from some rank $n\in\mathbb{N}$ }.
$$
Therefore, according to Proposition \ref{cont}, $\lambda_n\in\sigma_{asc}(T_n),$ from some rank, .
\end{proof}
\begin{proposition}\label{lem1}
  Let $(T_n)_{n\in\mathbb{N}}$ be a sequence of bounded linear operators  convergent to $T$ in the operator norm. Assume that for every $n\in\mathbb{N}$,  $\lambda_n\in\sigma_{asc}(T_n)$ and $\limsup\gamma(T_n)>0$. If $(\lambda_n)_n$ converges to $\lambda$ then $\lambda\in\sigma_{asc}(T)$.
\end{proposition}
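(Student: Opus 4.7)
The plan is to mirror the proof of Proposition \ref{lem2} but with upper-convergence in place of lower-convergence, since information now flows from the approximating sequence $(T_n,\lambda_n)$ to the limit $(T,\lambda)$.

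First, I would translate the hypothesis $\lambda_n \in \sigma_{asc}(T_n)$ via Proposition \ref{cont}: for every $n \in \mathbb{N}$ and every $m \geq 0$ there exists $d \geq m$ with $R((T_n-\lambda_n)^d) \cap N(T_n-\lambda_n) \neq \{0\}$. (Indeed, infinite ascent of $T_n - \lambda_n$ provides, for each $d$, a vector $v \in N((T_n-\lambda_n)^{d+1}) \setminus N((T_n-\lambda_n)^d)$, whose image under $(T_n-\lambda_n)^d$ is a nonzero element of the required intersection.) Choose a unit vector $x_n$ in this intersection.

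Second, since $T_n - \lambda_n \to T-\lambda$ in operator norm, Lemma \ref{lemma1} yields that $(N(T_n-\lambda_n))_n$ upper-converges to $N(T-\lambda)$; and since $(T_n-\lambda_n)^d \to (T-\lambda)^d$ in operator norm, the assumption $\limsup \gamma(T_n) > 0$ combined with Lemma \ref{lemma3}, applied to these powers, gives upper-convergence of $(R((T_n-\lambda_n)^d))_n$ to $R((T-\lambda)^d)$. By Definition \ref{def} i), any adherent point $x$ of the sequence $(x_n)$ lies in both $N(T-\lambda)$ and $R((T-\lambda)^d)$, and $\|x_n\| = 1$ forces $\|x\| = 1$, so $x \neq 0$. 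Thus $R((T-\lambda)^d) \cap N(T-\lambda) \neq \{0\}$ for every $m$ and some $d \geq m$, and Proposition \ref{cont} yields $\lambda \in \sigma_{asc}(T)$.

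The main obstacle is the existence of an adherent point of the unit-norm sequence $(x_n)$ inside a general Banach space, which is not automatic. I would circumvent this using the quantitative content of Lemma \ref{lemmasup} i): each $x_n$ admits approximants $y_n \in N(T-\lambda)$ and $z_n \in R((T-\lambda)^d)$ with $\|x_n - y_n\| \to 0$ and $\|x_n - z_n\| \to 0$, so $\|y_n - z_n\| \to 0$ while $\|y_n\|, \|z_n\| \to 1$. The closed-range hypothesis on $T$, together with closedness of the iterated range (which one expects from the $\gamma$-condition), should then force $N(T-\lambda) \cap R((T-\lambda)^d) \neq \{0\}$ via a gap/minimum argument: were the intersection trivial, the sum of these two closed subspaces would have a strictly positive gap, contradicting $\|y_n - z_n\| \to 0$ with $\|y_n\|, \|z_n\|$ bounded away from zero. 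A secondary technical point worth verifying is that $\limsup \gamma(T_n) > 0$ propagates to the powers $(T_n-\lambda_n)^d$, which is standard but does need justification.
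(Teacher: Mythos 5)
Your opening steps coincide with the paper's own proof: translate $\lambda_n\in\sigma_{asc}(T_n)$ through Proposition \ref{cont}, then invoke Lemma \ref{lemma1} and Lemma \ref{lemma3} to obtain upper-convergence of the kernels $N(T_n-\lambda_n)$ and of the ranges $R((T_n-\lambda_n)^d)$. You are also right that in a general Banach space a unit-norm sequence $(x_n)$ chosen in the intersections need not have any adherent point, so Definition \ref{def} i) alone gives nothing. The paper sidesteps this not by extracting a limit vector but by applying Lemma \ref{lemmasup} iii) to the intersection subspaces $E_n=R((T_n-\lambda_n)^d)\cap N(T_n-\lambda_n)$: these upper-converge into $R((T-\lambda)^d)\cap N(T-\lambda)$, so if the latter were $\{0\}$, iii) would force $E_n=\{0\}$ from some rank, contradicting (\ref{fleur}).

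Your substitute for this step contains a genuine gap. From Lemma \ref{lemmasup} i) you indeed get $y_n\in N(T-\lambda)$ and $z_n\in R((T-\lambda)^d)$ with $\|y_n-z_n\|\to 0$ and $\|y_n\|,\|z_n\|\to 1$, but the concluding claim that two closed subspaces with trivial intersection must be separated by a strictly positive gap is false in general: whenever $M\cap N=\{0\}$ but $M+N$ is not closed (possible even in Hilbert space), there are unit vectors $y\in M$, $z\in N$ with $\|y-z\|$ arbitrarily small. Nothing in the hypotheses makes $N(T-\lambda)+R((T-\lambda)^d)$ closed; moreover, the closed-range hypothesis on $T$ you lean on is not an assumption of this proposition (it appears in Propositions \ref{lem2} and \ref{lem3}, not here), and even closedness of $R(T-\lambda)$ or of $R((T-\lambda)^d)$ would not yield closedness of the sum. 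So the desired contradiction does not materialize as written; the argument is completed in the paper's way by applying Lemma \ref{lemmasup} iii) directly to the $E_n$. Your ``secondary'' concern is also a real one: Lemma \ref{lemma3} is stated for the operators themselves, so applying it to the powers $(T_n-\lambda_n)^d$ requires a lower bound on $\gamma((T_n-\lambda_n)^d)$, which does not follow formally from $\limsup\gamma(T_n)>0$; note, however, that the paper's own proof applies Lemma \ref{lemma3} to the powers with the same silent assumption, so this point is a weakness shared by both arguments rather than a defect of yours alone.
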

\begin{proof}
Let $\lambda_n\in\sigma_{asc}(T_n)$. This means, in view of Proposition \ref{cont}, that from some rank $n\in\mathbb{N}$,
 \begin{equation}\label{fleur}
\hbox{ for all $m\geq 0$, there exists $d\geq m$ satisfying } R((T_n-\lambda_n)^d)\cap N(T_n-\lambda_n)\neq \{0\},
 \end{equation}
Since $(T_n-\lambda_n)_n$ converges to $(T-\lambda)$ in the operator norm, then by Lemma \ref{lemma1} and  Lemma \ref{lemma3},  $N(T_n-\lambda_n)$ upper-converges to $N(T-\lambda)$ and
$R(T_n-\lambda_n)^d$ upper-converges to $R((T-\lambda)^d)$. Using Lemma \ref{lemmasup} iii),  (\ref{fleur}) implies,
$$
 \hbox{for all $m\geq 0$ there exists $ d\geq m$ satisfying } R((T-\lambda)^d)\cap N(T-\lambda)\neq \{0\}.
$$
Consequently, by Proposition \ref{cont}, $asc(T-\lambda)$ is infinite. Hence, $\lambda\in\sigma_{asc}(T)$.
\end{proof}
In order, to deal with the case of descent spectrum, we need to pove the following results:
\begin{proposition}\label{lem4}
 Let $(T_n)_{n\in\mathbb{N}}$ be a sequence of bounded linear operators convergent to $T$ in the operator norm. Assume that $\limsup\gamma(T_n)>0$.
 If $(\lambda_n)_n$ is convergent to $\lambda\in\sigma_{dsc}(T)$ then $\lambda_n\in \sigma_{dsc}(T_n)$ from some rank.
\end{proposition}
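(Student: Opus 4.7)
The plan is to adapt the argument of Proposition \ref{lem2}, using part ii) of Proposition \ref{cont} as the descent characterization. Since the existence of a subspace $Y_p \subset N(T^m)$ satisfying $X = Y_p \oplus R(T^p)$ is equivalent to the algebraic equality $X = N(T^m) + R(T^p)$ (take $Y_p$ to be any algebraic complement of $N(T^m) \cap R(T^p)$ inside $N(T^m)$), the hypothesis $\lambda \in \sigma_{dsc}(T)$ translates into
$$
\forall m \geq 0,\ \exists p \geq m,\quad N((T-\lambda)^m) + R((T-\lambda)^p) \neq X.
$$
I would fix such $m$, $p$, and a witness $x \in X$ lying outside this sum.

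Next I would pass to the perturbed operators. Since $(T_k - \lambda_k)^j \to (T-\lambda)^j$ in the operator norm for every fixed $j$, Lemma \ref{lemma1} gives that $N((T_k - \lambda_k)^m)$ upper-converges to $N((T-\lambda)^m)$, while Lemma \ref{lemma3}, invoked with the hypothesis $\limsup \gamma(T_k) > 0$ transferred to the appropriate power, gives that $R((T_k-\lambda_k)^p)$ upper-converges to $R((T-\lambda)^p)$. From these two upper-convergences I would deduce, in the same manner that Proposition \ref{lem2} propagates lower-convergence to intersections, that the algebraic sum $N((T_k - \lambda_k)^m) + R((T_k-\lambda_k)^p)$ upper-converges to $N((T-\lambda)^m) + R((T-\lambda)^p)$ in the sense of Definition \ref{def}.

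The conclusion then follows by contraposition, exactly as in Proposition \ref{lem2}. Were $\lambda_{k_j} \notin \sigma_{dsc}(T_{k_j})$ along some subsequence, Proposition \ref{cont} ii) would force $X = N((T_{k_j} - \lambda_{k_j})^m) + R((T_{k_j} - \lambda_{k_j})^p)$ along that subsequence, so the fixed vector $x$ would lie in every such sum. Viewed as the constant (hence adherent) sequence, the upper-convergence of sums would place $x$ in $N((T-\lambda)^m) + R((T-\lambda)^p)$, contradicting the choice of $x$. Hence $\lambda_k \in \sigma_{dsc}(T_k)$ from some rank.

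The main obstacle will be propagating upper-convergence from kernels and ranges to their algebraic sum: a bald application of the definition fails (two collapsing one-dimensional subspaces can already have a two-dimensional sum), so one must exploit the uniform closed-range information coming from $\limsup \gamma(T_k) > 0$ to obtain bounded decompositions $y_k = e_k + f_k$ with $e_k \in N((T_k-\lambda_k)^m)$ and $f_k \in R((T_k-\lambda_k)^p)$, and then extract subsequences of the components converging to elements of the respective limit subspaces. This is the analogue here of the appeal to the closed-range theorem (Theorem 17 p.102 of \cite{Muller}) used repeatedly in Subsection 2.2.
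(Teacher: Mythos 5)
Your strategy is the same as the paper's: translate $\lambda\in\sigma_{dsc}(T)$ via Proposition \ref{cont} ii) into the properness of a sum of a kernel and a range, transport kernels and ranges of $T_n-\lambda_n$ to those of $T-\lambda$ by the convergence lemmas of Subsection 2.2, and conclude by an adherence (upper-convergence) argument. One difference of detail: the paper works with $R(T_n-\lambda_n)+N((T_n-\lambda_n)^d)$, i.e.\ only the first power of the range (in effect using $dsc(S)\leq d$ iff $X=N(S^d)+R(S)$), whereas your version uses $R((T-\lambda)^p)$ and therefore needs the unproved claim that $\limsup\gamma(T_n)>0$ ``transfers to the appropriate power'': it does not in general, since closedness of the range does not pass from $S$ to $S^p$, and in any case $\gamma(T_n)$ controls $T_n$ and not the shifted operators $T_n-\lambda_n$. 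Choosing the paper's form of the characterization would at least remove that extra debt.

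Beyond this, there are two genuine gaps. First, the step you yourself single out --- deducing upper-convergence of the algebraic sum from upper-convergence of the two summands --- is exactly where the proof has to be made, and your sketch does not close it: ``bounded decompositions plus extraction of convergent subsequences of the components'' is unavailable in an infinite-dimensional Banach space (bounded sets are not norm-compact). The workable substitute is the gap estimate $dist(y_k,N+R)\leq\|e_k\|\,\delta(N_k,N)+\|f_k\|\,\delta(R_k,R)$, but this requires both a uniform bound on the decompositions $y_k=e_k+f_k$ (a uniform angle condition between $N_k$ and $R_k$, which does not follow from $\limsup\gamma(T_k)>0$) and closedness of $N((T-\lambda)^m)+R((T-\lambda)^p)$ to turn $dist(x,\cdot)=0$ into membership; neither is granted by the hypotheses. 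To be fair, the paper's own proof simply asserts this passage to the sum (citing Lemma \ref{lemma 5}, Lemma \ref{lemma3'} and Lemma \ref{lemmasup}) without justifying it, so you have correctly located the crux --- but flagging it is not proving it. Second, your contraposition has a quantifier slip: $\lambda_{k_j}\notin\sigma_{dsc}(T_{k_j})$ only gives $dsc(T_{k_j}-\lambda_{k_j})=m_j<\infty$ with $m_j$ depending on $j$ and possibly exceeding the exponent $m$ you fixed from the limit operator, so Proposition \ref{cont} ii) yields $X=N((T_{k_j}-\lambda_{k_j})^{m_j})+R((T_{k_j}-\lambda_{k_j})^p)$ rather than the identity with your fixed $m$; your chosen witness $x$ therefore need not belong to the sums you compare against, and the contradiction does not materialize as written unless the descents $m_j$ are uniformly bounded.
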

\begin{proof}
Let $\lambda\in\sigma_{dsc}(T)$.  Using Proposition \ref{cont}, we have
\begin{equation}\label{V}
\hbox{ for all  $m\geq 0$, there exists $d \geq m$ satisfying } R(T-\lambda)+ N((T-\lambda)^d)\subsetneq X.
\end{equation}
  Since $(T_n-\lambda_n)_n$ converges to $(T-\lambda)$, then by Lemma \ref{lemma 5} (respectively,  Lemma \ref{lemma3'}),  $N(T-\lambda)$ (respectively, $R(T-\lambda)$) is the $u-limit$ of the sequence $(N(T_n-\lambda_n))_n$ (respectively, $(R(T_n-\lambda_n))_n$). Thus, (\ref{V}) is equivalent to say
$$
 \hbox{ for every $m\geq 0$ there exists $d\geq m$ satisfying }u-\lim\limits_{n\to\infty}\left( R(T_n-\lambda_n)+ N((T_n-\lambda_n)^d)\right)\subsetneq X.
$$
Using Lemma \ref{lemmasup} i) it follows,
\begin{equation}\label{lambda-n}
 \hbox{ from some rank $n\in\mathbb{N}$, for every $m\geq 0$, there exists $d\geq m$ satisfying } R(T_n-\lambda_n)+ N((T_n-\lambda_n)^d)\subsetneq X.
\end{equation}
Hence, $\lambda_n\in\sigma_{dsc}(T_n)$, from some rank.
\end{proof}
The following result is the reciprocal of Propostion \ref{lem4}.
\begin{proposition}\label{lem3}
  Let $(T_n)_{n\in\mathbb{N}}$  be a sequence of bounded linear operators convergent to $T$ in the operator norm. Assume that for every $n\in\mathbb{N}$, $\lambda_n\in\sigma_{dsc}(T_n)$, that $T$ has a closed range and  that for all $x$ in $N(T),$ $dist(x,N(T_n))$ is reached from some rank. If $\lambda_n$ converges to $\lambda$ then $\lambda\in\sigma_{dsc}(T)$.
\end{proposition}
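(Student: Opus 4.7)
The plan is to mirror Proposition \ref{lem1} in the descent setting, running the sequence-to-limit direction parallel to how Proposition \ref{lem4} handled the limit-to-sequence direction. Throughout I will use the characterization employed in Proposition \ref{lem4}: that $dsc(S)=\infty$ if and only if for every $m\geq 0$ there exists $d\geq m$ with $R(S)+N(S^d)\subsetneq X$. Translating the hypothesis $\lambda_n\in\sigma_{dsc}(T_n)$ (for every $n$) by this criterion, I get: for each $n$ and each $d\geq 0$,
\[
R(T_n-\lambda_n)+N((T_n-\lambda_n)^d)\subsetneq X.
\]

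Next I would bring in the lower-convergence lemmas. The hypotheses that $T$ has closed range and that $dist(x,N(T_n))$ is reached from some rank for every $x\in N(T)$ are tailored precisely for Lemma \ref{lemma 5}, which (applied to $(T_n-\lambda_n)^d\to(T-\lambda)^d$, tacitly extending the closed-range and distance-reached conditions to higher powers, as is done throughout this subsection) yields that $N((T-\lambda)^d)$ is the $l$-limit of $N((T_n-\lambda_n)^d)$. Lemma \ref{lemma3'} unconditionally gives $R(T-\lambda)$ as the $l$-limit of $R(T_n-\lambda_n)$. A direct check from Definition \ref{def}(ii) shows that the $l$-limit of the sums contains the sum of the $l$-limits, so $R(T-\lambda)+N((T-\lambda)^d)$ sits inside the $l$-limit of $R(T_n-\lambda_n)+N((T_n-\lambda_n)^d)$.

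I then proceed by contradiction. Fix $d$ and suppose $R(T-\lambda)+N((T-\lambda)^d)=X$. Then $(R(T_n-\lambda_n)+N((T_n-\lambda_n)^d))_n$ lower-converges to $X$, and Lemma \ref{lemmasup}(iv) forces these subspaces to equal $X$ from some rank, contradicting the translated hypothesis. Consequently there are arbitrarily large $d$ with $R(T-\lambda)+N((T-\lambda)^d)\subsetneq X$, so $dsc(T-\lambda)=\infty$ by Proposition \ref{cont}(ii), and therefore $\lambda\in\sigma_{dsc}(T)$, as required.

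The main obstacle is the hidden closedness demand of Lemma \ref{lemmasup}(iv): the spaces $R(T_n-\lambda_n)+N((T_n-\lambda_n)^d)$ are not automatically Banach, because $R(T_n-\lambda_n)$ need not be closed and the sum of two closed subspaces is in general not closed. A cleaner route would be a direct gap argument: if $R(T-\lambda)+N((T-\lambda)^d)=X$, then (once $R(T-\lambda)$ is known to be closed, which requires transferring the closed-range hypothesis from $T$ to $T-\lambda$) the open mapping theorem yields bounded decompositions $x=r+n$ with $\|r\|,\|n\|\leq C\|x\|$, and Lemma \ref{lemmasup}(ii) would then approximate each piece to make $R(T_n-\lambda_n)+N((T_n-\lambda_n)^d)$ dense in $X$; together with an appropriate closedness this promotes to equality and produces the contradiction. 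Since the subsection glosses over these closedness and higher-power extensions uniformly, I would present the main-line proof as sketched and flag the missing hypotheses in a remark.
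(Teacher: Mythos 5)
Your proposal follows essentially the same route as the paper's proof: translate $\lambda_n\in\sigma_{dsc}(T_n)$ via Proposition \ref{cont}, pass to the limit using lower-convergence of the null spaces (Lemma \ref{lemma 5}, which is exactly what the closed-range and distance-reached hypotheses are for) and of the ranges (Lemma \ref{lemma3'}), and conclude with Lemma \ref{lemmasup} iv) by contradiction. You in fact cite the correct lower-convergence lemmas where the paper's text miscites Lemma \ref{lemma1} and Lemma \ref{lemma3}, and the closedness and higher-power issues you flag are likewise left implicit in the paper's own argument.
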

\begin{proof}
Let $\lambda_n\in\sigma_{dsc}(T_n)$ for every $n\in\mathbb{N}$. This means by Proposition \ref{cont},
\begin{equation}\label{4.16}
\hbox{ for all $m\geq0$, there exists $d\geq m$  satisfying  }R(T_n-\lambda_n)+N((T_n-\lambda)^d)\subsetneq X.
\end{equation}
On the other hand,  $T_n-\lambda_n$ converges to $T-\lambda$ in the operator norm.  By Lemma \ref{lemma1} and Lemma \ref{lemma3}, $N(T_n-\lambda_n)^d$ lower-converges to $N(T-\lambda)^d$ and  $R(T_n-\lambda_n)^d$ lower-converges to $R(T-\lambda)$. Thus, it follows by Lemma \ref{lemmasup} iv) that (\ref{4.16}) implies
$$
 \hbox{ for all $m\geq 0$ there exists $ d\geq m$ satisfying }R(T-\lambda)+N((T-\lambda)^d)\subsetneq X.
$$
Hence, in view of Proposition \ref{cont}, $\lambda\in\sigma_{dsc}(T)$.
\end{proof}
\paragraph*{Proof of Theorem \ref{T1}.}
  
According to Proposition \ref{lem4} and Proposition \ref{lem3},  we obtained i), thus ii) was  is immediate consequence of   Proposition \ref{lem2} and Proposition \ref{lem1} \qed
\section{Application}
Let $\mathcal{H}_{1}$ and $\mathcal{H}_{2}$ be two Hilbert space. We consider the two $2\times2$ block operator matrices defined on $\mathcal{H}_{1}\times\mathcal{H}_{2}$ by
\begin{align*}M=\left(
 \begin{array}{cc}
 T & 0 \\
 0 & S \\
\end{array}
\right)\in B(\mathcal{H}_{1}\times\mathcal{H}_{2})
\hbox{ and }
M_{C}=\left(
 \begin{array}{cc}
 T & C \\
 0 & S \\
\end{array}
\right)\in B(\mathcal{H}_{1}\times\mathcal{H}_{2}),\\
\end{align*}
\noindent where,  $T\in B(\mathcal{H}_{1})$,
  $S \in B(\mathcal{H}_{2})$ and $C\in B(\mathcal{H}_{2},\mathcal{H}_{1})$. Observe that
  \begin{align*}\displaystyle M_{\frac{1}{k}C}=\left(
 \begin{array}{cc}
 I & 0 \\
 0 & kI \\
\end{array}
\right)
\left(
 \begin{array}{cc}
 T & C \\
 0 & S \\
\end{array}
\right)
\left(
 \begin{array}{cc}
 I & 0 \\
 0 & \frac{1}{k}I \\
\end{array}
\right), \mbox{ for }k\in\mathbb{N}^{*}.
\end{align*}
\noindent We assume that $S$ and $T$ have  closed ranges. Since $M_C$ and $M_{\frac{1}{k}C}$ are similar, it follows that $\sigma_i(M_{\frac{1}{k}C})=\sigma_i(M_C)$, $i\in\{asc, dsc\}$.\\\\
Let $\mathcal{T}$ and $\mathcal{S}$ be two $2\times2$ block operator matrices defined by
 \begin{align*}\displaystyle \mathcal{T}=\left(
\begin{array}{cc}
 T & 0 \\
 0 & 0 \\
\end{array}
\right)\in B(\mathcal{H}_{1}\times\mathcal{H}_{2})
\hbox{ and }
\mathcal{S}=\left(
 \begin{array}{cc}
 0 & 0 \\
 0 & S \\
\end{array}
\right)\in B(\mathcal{H}_{1}\times\mathcal{H}_{2}).\\
\end{align*}It is easy to verify  that hypothesis of Corollary \ref{nov} and Lemma \ref{monn} are satisfied. Consequently,  $\sigma_i(M)\backslash\{0\}\subset(\sigma_i(\mathcal{T})\cup\sigma_i(\mathcal{S}))\backslash\{0\}$.
Using Lemma \ref{4.1}, $\sigma_i(M)=\sigma_i(T)\cup\sigma_i(S)=\sigma_i(\mathcal{T})\cup\sigma_i(\mathcal{S})$.
 By Theorem \ref{T1}, $\sigma_i(M_C)=\sigma_i(T)\cup\sigma_i(S)$.

\end{document}